\documentclass[aap,press]{apt} \volno{00} \partno{00}
\firstpage{1}
\usepackage[colorlinks = true,linkcolor = blue,urlcolor  = blue,citecolor = blue,anchorcolor = blue]{hyperref}

\paperno{XXXX-XXXX} 
\authornames{C. KRESIN {\it et al.}} 
\shorttitle{Bispectra of Poisson cluster processes} 

\usepackage{amssymb,mathtools,microtype}

\newcommand{\E}{\mathbb{E}}
\newcommand{\PP}{\mathbb{P}}
\newcommand{\R}{\mathbb{R}}
\renewcommand{\ii}{\mathrm{i}}
\newcommand{\1}{\mathbf{1}}
\newcommand{\Law}{\mathcal{L}}
\newcommand{\Ree}{\mathrm{Re}}
\newcommand{\Imm}{\mathrm{Im}}

\numberwithin{equation}{section}

\begin{document}

\title{Orientation in Poisson Cluster Processes via Imaginary Bispectra}

\authoronetwo[University of Otago]{Conor Kresin}
\authoronethree[University of Otago]{Yifu Tang}
\authoronefour[University of Otago]{Boris Baeumer}
\authoronefive[University of Otago]{Ting Wang}

\addressone{Department of Mathematics and Statistics, University of Otago, 730 Cumberland Street, Dunedin 9016, Otago, New Zealand}
\emailtwo{conor.kresin@otago.ac.nz}
\emailthree{yifu.tang@otago.ac.nz}
\emailfour{boris.baeumer@otago.ac.nz}
\emailfive{ting.wang@otago.ac.nz}

\begin{abstract}
We study what remains detectable about one-sided Poisson cluster processes after cluster orientation is erased. We construct matched reversible cluster nulls preserving intensity and the full Bartlett spectrum, showing that second-order structure alone need not identify temporal direction. For stationary Poisson branching clusters, we derive the Fourier--Stieltjes transform of the reduced third cumulant and show that, in the $L^1$ third-cumulant regime, a nonzero imaginary factorial bispectrum certifies orientation. We also give explicit orientation-erased nulls, reversible spectral matches for monotone Hawkes kernels, and finite-window third-order orientation contrasts.
\end{abstract}

\keywords{bispectrum; Poisson cluster process; Hawkes process}

\ams{60G55}{60G10; 62M15}

\section{Introduction}

A clustered event sequence can be viewed in two related ways. In a temporal interpretation, later events are generated from earlier ones, as in a one-sided Hawkes or branching model. In a spatial interpretation, the same events are treated as an unoriented point pattern on the line, generated by latent clusters but without assigning causal meaning to the order of points inside a cluster. First- and second-order summaries largely belong to this spatial view: they measure intensity, pair distances, covariance, spectra, and overdispersion, but they do not by themselves determine a direction of generation.

We use the following matched-comparison framework. Let \(N_1\)
denote the oriented one-sided cluster process of interest, for example
a Hawkes branching-cluster process. A \emph{matched reversible
comparison} for \(N_1\) is a stationary cluster process \(\widetilde N\)
such that \(\widetilde N\stackrel d=R\widetilde N\), where
\((RN)(A):=N(-A)\), and such that \(\widetilde N\) has the same
intensity and the same complete reduced covariance measure as \(N_1\),
equivalently the same full Bartlett spectrum. Thus ``matched'' refers
only to first- and second-order structure: the comparison process is
not required to have the same full law as \(N_1\), and in particular
need not be a causal Hawkes process.

One concrete way to erase orientation is to reflect whole clusters:
replace a cluster \(C\) by \(SC\), where \(S=\pm1\) is an independent
sign. With a fair sign this produces a reversible cluster law. Because
ordered pair-difference measures are unchanged by whole-cluster
reflection, this orientation-erased process has the same intensity and
Bartlett spectrum as the forward process. In this precise sense,
second-order summaries can certify clustering but not temporal
orientation within the matched comparison class. The problem we address is therefore to identify what distinguishes the
oriented law \(N_1\) from such matched reversible comparisons, and to
find the lowest-order statistics that certify temporal orientation
against them.

The main results are as follows. First, the Fourier--Stieltjes transform of the reduced complete, or ordinary, third cumulant measure of a stationary Poisson branching-cluster process has a simple formula; no causal or one-sided support assumption enters the derivation. For a centrally symmetric offspring density on the line the process is reversible, and the  Fourier--Stieltjes  transforms of the reduced complete and factorial third cumulant measures are real. Second, per-cluster orientation randomization produces an explicit matched reversible null that preserves first and second order exactly. Third, if the reduced third factorial cumulant has an $L^1(\R^2)$ density, then reversibility of any comparison null in this sense forces the factorial bispectrum to be real, so a nonzero imaginary part gives a third-order certificate of oriented temporal structure. Fourth, for the per-cluster sign-biased family \(N_\theta\)
connecting the forward law \(N_1\) to its reversed orientation
\(N_{-1}\), let \(B_\theta\) denote the factorial bispectrum of
\(N_\theta\), whenever the reduced third factorial cumulant has an
\(L^1\) density. Then only the odd third-order component changes:
\[
B_\theta=\Ree B_1+\ii\theta\,\Imm B_1,
\]
where \(B_1\) is the bispectrum of the forward process.
Fifth, every monotone branching kernel admits an explicit reversible branching-cluster spectral match within the branching framework; this includes the exponential case and heavy-tailed monotone kernels. Finally, for broad classes of one-sided kernels, the imaginary bispectrum is not merely allowed by the theory but is provably nonzero somewhere. We also note one observable consequence: jointly odd third-order statistics have exact finite-window mean formulae linear in the sign parameter and large-window mean limits given by weighted imaginary-bispectrum functionals.

Linear Hawkes processes are canonical models for directed temporal
clustering and have a branching representation \cite{HawkesOakes1974}.
Their spectra are studied in \cite{Hawkes1971}. Spectral estimation is developed for count data in \cite{cheysson2022spectral} and for multivariate stationary Hawkes processes in \cite{tang2026spectral}; \cite{cheysson2022spectral} also discusses a non-causal cluster-based extension. This does not conflict with identifiability results inside the causal Hawkes class. Under the usual
stationarity and stability assumptions, and when the full matrix-valued
second-order structure is known, a causal multivariate linear Hawkes
kernel can be identified within that model class through a Wiener--Hopf
equation \cite{BacryMuzy2016}. Our comparison is deliberately outside
that class: even if \(N_1\) is Hawkes, a matched reversible comparison
is required only to reproduce its intensity and second-order structure.
It need not preserve the forward Hawkes law, the causal genealogy, or
the adapted conditional-intensity interpretation.

The conceptual consequence is that second-order agreement with a
Hawkes model can certify clustering without, by itself, certifying a
directed self-excitation mechanism in this broader comparison class. In the present matched-comparison framework, a nonzero odd third-order component is a sufficient third-order certificate of orientation: it is carried by the part of the reduced third factorial cumulant that changes sign under simultaneous reversal of the two lags, or equivalently by the imaginary part of the factorial bispectrum. This distinction is especially important in applications where Hawkes models are interpreted mechanistically rather than merely used for prediction.

\subsection{Relation to the literature}

Cumulants and spectra for stationary point processes go back at least to the classical framework of Brillinger \cite{Brillinger1975}. Higher-order spectra and cumulants for cluster and Hawkes processes are developed in \cite{TakahashiEtAl1998}, \cite{JovanovicHertzRotter2015}, and \cite{Privault2021}, and integrated second- and third-order cumulants have been used for multivariate Hawkes-network recovery \cite{AchabEtAl2018}. Spatial cluster process models present their own identifiability and fitting issues \cite{BaddeleyEtAl2022}. Recent work also uses cluster representations to derive Palm and spectral-intensity identities \cite{kirchner2026palm} and extends Bartlett identities to spatial point processes via variational derivatives of the log-likelihood \cite{clark2026bartlett}. These developments motivate asking which aspects of a causal one-sided cluster law are not already encoded in intensity and second-order structure.

Frequency-domain uses of the imaginary part of the ordinary bispectrum
as a witness of time irreversibility are classical in time-series
analysis: Hinich and Rothman \cite{HinichRothman1998} developed the
REVERSE test for scalar stationary time series by exploiting the
implication that reversibility forces the imaginary ordinary bispectrum
to vanish. Related work on real-valued bispectra \cite{IgloiTerdik2014} provides
a useful caution: the implication used here is one-way. Reversibility
forces the relevant bispectrum to be real under the stated assumptions,
but real-valuedness alone does not establish reversibility.

Our matched
reversible-null framing is different: we work with stationary point
processes and factorial cumulants, construct reversible cluster
comparisons preserving intensity and the full Bartlett spectrum, and
isolate the odd third-order factorial component as the remaining
orientation signal for Poisson cluster and Hawkes-type branching laws.
Thus, in the $L^1$ third-cumulant regime, reversibility of the matched
comparison forces a real factorial bispectrum, so a nonzero imaginary
factorial bispectrum gives a sufficient third-order certificate of
temporal orientation beyond matched first- and second-order structure.
This viewpoint also connects to Hawkes-specific time-reversal
diagnostics \cite{CordiChalletMuniToke2018}, to Granger interpretations
of multivariate Hawkes models \cite{EichlerDahlhausDueck2017}, and to
stochastic declustering and probabilistic reconstruction of triggering
structure from event times \cite{ZhuangOgataVereJones2002}.

\section{Frequency-domain third cumulants for branching clusters and reversible nulls}

This section collects the conventions and comparison-null facts used throughout the paper, separating two ideas. First, reversibility is the structural property of our comparison nulls and the reason their reduced third factorial cumulants are centrally symmetric; in the $L^1$ regime this yields a real factorial bispectrum. Second, the branching-cluster third-order Fourier--Stieltjes formula itself is not intrinsically causal: in the one-dimensional form needed here, it applies both to one-sided temporal Hawkes models and to centrally symmetric reversible branching nulls.

For $u\in\R$ and a counting measure $\mu$, let $\vartheta_u\mu$ denote the translate
\[
(\vartheta_u\mu)(A):=\mu(A-u),\qquad A\subset\R\ \text{Borel}.
\]
For a bounded compactly supported Borel function $f$, write
\[
N(f):=\int f\,dN = \sum_{x\in N}f(x) .
\]
For a bounded compactly supported Borel function $q$ with $1+q\ge0$, define the factorial moment generating functional by
\[
G_{\mathrm{fac}}(q)
:=
\E\prod_{x\in N}(1+q(x))
\]
whenever the expectation is finite. On the domain where this expectation is finite and positive, define the factorial cumulant generating functional by
\[
K_{\mathrm{fac}}(q)
:=
\log G_{\mathrm{fac}}(q).
\]
The complete, or ordinary, cumulant generating functional is, whenever finite,
\[
K_{\mathrm{comp}}(f)
:=
\log \E e^{N(f)} .
\]
Without local exponential-moment assumptions these functionals need not be finite for positive test functions. We use them only either on domains where the logarithms are finite, for example for nonpositive compactly supported $f$ and for $q\in(-1,0]$, or through their finite-order derivatives at the origin. Under the moment assumptions stated below, those derivatives are justified directly by truncation and dominated convergence.
Since
\[
e^{N(f)}=\prod_{x\in N} e^{f(x)}=\prod_{x\in N}\{1+(e^{f(x)}-1)\},
\]
we have, whenever both sides are finite,
\begin{equation}\label{eq:ordinary-factorial-generating-functional}
K_{\mathrm{comp}}(f)
=
K_{\mathrm{fac}}(e^f-1).
\end{equation}
At the level of derivatives at the origin this identity is the source of the usual diagonal corrections relating complete and factorial cumulants.

For $n\ge1$ and test functions $f_0,\ldots,f_{n-1}$, the $n$th factorial moment measure $\alpha_n$, factorial cumulant measure $\kappa_n$, and complete cumulant measure $\gamma_n$ are characterized, whenever the displayed functionals are differentiable at the origin, by the variational identities below. Under the moment hypotheses used later, the same formulae are interpreted directly as finite variational derivatives at the origin. Here and in the next two displays,
\(s=(s_0,\ldots,s_{n-1})\in\R^n\), the variables \(s_j\) are
independent scalar parameters, and \(\big|_{s=0}\) means evaluation at
\(s_0=\cdots=s_{n-1}=0\):
\[
\int_{\R^n}\prod_{j=0}^{n-1}f_j(t_j)\,
\alpha_n(dt_0,\ldots,dt_{n-1})
=
\left.
\partial_{s_0}\cdots\partial_{s_{n-1}}
G_{\mathrm{fac}}\!\left(\sum_{j=0}^{n-1}s_jf_j\right)
\right|_{s=0},
\]
\[
\int_{\R^n}\prod_{j=0}^{n-1}f_j(t_j)\,
\kappa_n(dt_0,\ldots,dt_{n-1})
=
\left.
\partial_{s_0}\cdots\partial_{s_{n-1}}
K_{\mathrm{fac}}\!\left(\sum_{j=0}^{n-1}s_jf_j\right)
\right|_{s=0},
\]
and
\[
\int_{\R^n}\prod_{j=0}^{n-1}f_j(t_j)\,
\gamma_n(dt_0,\ldots,dt_{n-1})
=
\left.
\partial_{s_0}\cdots\partial_{s_{n-1}}
K_{\mathrm{comp}}\!\left(\sum_{j=0}^{n-1}s_jf_j\right)
\right|_{s=0}.
\]
Here ``complete'' and ``ordinary'' are used synonymously. Unless explicitly stated otherwise, $\kappa_n$ denotes the factorial cumulant measure.

For $n\ge2$, the reduced versions of these measures are defined by stationarity through
\[
\int_{\R^n} F(t_0,\dots,t_{n-1})\,M_n(dt_0,\dots,dt_{n-1})
=
\int_\R\int_{\R^{n-1}}
F(t_0,t_0+\tau_1,\dots,t_0+\tau_{n-1})\,
M_n^{\mathrm{red}}(d\tau_1,\dots,d\tau_{n-1})\,dt_0,
\]
where $M_n$ is any of $\alpha_n,\kappa_n,\gamma_n$, whenever the corresponding reduced measure is well defined. These are the standard stationary point-process factorial moment and cumulant conventions used in \cite{Brillinger1975} and \cite{daley2003introduction}; the only normalization choice to note is that we do not factor out explicit powers of the intensity from these reduced measures.

For $q\in\{1,2\}$ and $f\in L^1(\R^q)$, we use the Fourier convention
\[
\widehat f(\omega):=\int_{\R^q} e^{-\ii\,\omega\cdot t}f(t)\,dt,
\qquad
f(t)=\frac{1}{(2\pi)^q}\int_{\R^q} e^{\ii\,\omega\cdot t}\widehat f(\omega)\,d\omega
\]
whenever $\widehat f\in L^1(\R^q)$. The same forward-transform convention is used for finite signed measures on $\R^q$.

When $\tau=(\tau_1,\tau_2)$, we write $-\tau:=(-\tau_1,-\tau_2)$. Expectations under the law of $N_\theta$ are denoted by $\E_\theta$. Throughout, $\sum^{\neq}$ denotes summation over ordered tuples of pairwise distinct points or vertices; equivalently, if a finite counting measure is represented by labelled atoms, the labels in the tuple are distinct. For an integer-valued random variable \(M\), we write
\[
(M)_3:=M(M-1)(M-2).
\]

With this convention,
\[
\gamma_2^{\mathrm{red}}=\lambda\delta_0+\kappa_2^{\mathrm{red}},
\]
and hence
\[
\Gamma(\omega):=\widehat{\gamma_2^{\mathrm{red}}}(\omega)
=\lambda+\int_\R e^{-\ii\omega\tau}\,\kappa_2^{\mathrm{red}}(d\tau)
\]
is the full Bartlett spectrum, meaning the Fourier transform of the complete reduced covariance measure including the Poisson diagonal baseline. Whenever this Fourier--Stieltjes transform is an ordinary function, the symmetry $\kappa_2^{\mathrm{red}}(A)=\kappa_2^{\mathrm{red}}(-A)$ makes $\Gamma$ real-valued and even.

We reserve the term \emph{bispectrum} for the case where the reduced third factorial cumulant has an $L^1$ density; otherwise we speak of the corresponding Fourier--Stieltjes transform of the reduced third factorial cumulant.

\begin{lem}[Cumulant functionals for Poisson cluster processes]\label{lem:pcp-cumulant-functional}
Let
\[
N=\sum_j\vartheta_{U_j}C_j
\]
be a stationary Poisson cluster process with immigrant rate $\nu$, where the $C_j$ are i.i.d. copies of a finite simple cluster $C$ and $\E C(\R)<\infty$; if multiplicities are allowed, products and sums below are over atoms counted with multiplicity. Whenever the following logarithms are finite, for instance for bounded compactly supported $q$ with $-1<q\le0$ and bounded compactly supported $f\le0$ under the preceding finite-mean assumption, the Poisson product formula gives
\[
K_{\mathrm{fac}}^N(q)
=
\nu\int_\R
\left\{
\E\prod_{y\in C}(1+q(u+y))-1
\right\}\,du
\]
and
\[
K_{\mathrm{comp}}^N(f)
=
\nu\int_\R
\left\{
\E\exp\big(C(f(u+\cdot))\big)-1
\right\}\,du .
\]
The derivative identities used below require only finite moments: for every $n\ge1$ with $\E[C(\R)^n]<\infty$ and bounded compactly supported $f_0,\ldots,f_{n-1}$,
\[
\int_{\R^n}\prod_{j=0}^{n-1} f_j(t_j)\,
\kappa_n(dt_0,\ldots,dt_{n-1})
=
\nu\int_\R
\E\sum_{x_0,\ldots,x_{n-1}\in C}^{\neq}
\prod_{j=0}^{n-1}f_j(u+x_j)\,du,
\]
whereas
\[
\int_{\R^n}\prod_{j=0}^{n-1} f_j(t_j)\,
\gamma_n(dt_0,\ldots,dt_{n-1})
=
\nu\int_\R
\E\sum_{x_0,\ldots,x_{n-1}\in C}
\prod_{j=0}^{n-1}f_j(u+x_j)\,du.
\]
Equivalently, for $n\ge2$,
\[
\kappa_n^{\mathrm{red}}(d\tau_1,\ldots,d\tau_{n-1})
=
\nu\,\E\sum_{x_0,\ldots,x_{n-1}\in C}^{\neq}
\prod_{r=1}^{n-1}\delta_{x_r-x_0}(d\tau_r),
\]
and
\[
\gamma_n^{\mathrm{red}}(d\tau_1,\ldots,d\tau_{n-1})
=
\nu\,\E\sum_{x_0,\ldots,x_{n-1}\in C}
\prod_{r=1}^{n-1}\delta_{x_r-x_0}(d\tau_r).
\]
In particular, if $\E[C(\R)^3]<\infty$ and
\[
W(\omega):=\widehat C(\omega)=\sum_{x\in C}e^{-\ii\omega x},
\]
then
\[
\widehat{\gamma_3^{\mathrm{red}}}(\omega_1,\omega_2)
=
\nu\,\E\{W(\omega_1)W(\omega_2)W(-\omega_1-\omega_2)\}.
\]
\end{lem}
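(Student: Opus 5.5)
The argument runs in three stages. First I prove the generating-functional formulae on the safe domain by conditioning on the immigrant Poisson process. Then I get the moment identities without exponential moments by working directly with factorial moments of finite sums and Campbell-type formulas rather than differentiating the log. Finally I reduce by stationarity and take Fourier transforms.

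\emph{Stage 1: generating functionals.} Take $q$ bounded and compactly supported with $-1<q\le0$. Conditioning on the immigrant process $\{U_j\}$, the clusters $C_j$ are i.i.d., so
\[
\E\Big[\prod_{x\in N}(1+q(x))\,\Big|\,\{U_j\}\Big]=\prod_j g(U_j),
\qquad g(u):=\E\prod_{y\in C}(1+q(u+y))\in(0,1].
\]
The Poisson probability generating functional then gives
\[
G^N_{\mathrm{fac}}(q)=\exp\Big(\nu\int_\R (g(u)-1)\,du\Big).
\]
To see that this is finite, note that $1-g(u)\le \E\sum_{y\in C}|q(u+y)|$, so $\int_\R(1-g)\,du\le\|q\|_1\,\E C(\R)<\infty$. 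This yields the formula for $K_{\mathrm{fac}}$. The complete version follows in the same way from $e^{f}=1+(e^{f}-1)$, with $e^f-1\in(-1,0]$, or equivalently from \eqref{eq:ordinary-factorial-generating-functional}.

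\emph{Stage 2: moment identities.} I would not differentiate these logarithms, since positive test functions may fall outside the domain. Instead I use the combinatorial definition of the variational derivatives at the origin directly.
\begin{itemize}
\item \emph{Factorial moments.} Write $N^{[n]}$ for the factorial power measure, so that $\int\prod_j f_j\,d\alpha_n=\E\sum^{\neq}_{t_0,\ldots,t_{n-1}\in N}\prod_j f_j(t_j)$. Each ordered distinct $n$-tuple of atoms of $N$ is split according to the partition of $\{0,\ldots,n-1\}$ induced by which cluster each atom belongs to.
\item \emph{Campbell formula.} The multivariate Campbell/Mecke formula for the Poisson immigrant process, applied to distinct immigrants, gives
\[
\alpha_n=\sum_{\pi}\prod_{B\in\pi}\Big(\nu\int_\R \E\sum^{\neq}_{x_B\in C}\prod_{j\in B}f_j(u+x_j)\,du\Big).
\]
Each factor is finite: it is bounded by $\nu\,\|f\|_\infty^{|B|-1}\|f\|_1\,\E (C(\R))_{|B|}$, which is finite because $\E C(\R)^n<\infty$.
\item \emph{Möbius inversion.} Factorial cumulants are obtained from factorial moments by Möbius inversion over set partitions. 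This is exactly what the derivative definition of $\kappa_n=\partial^n\log G_{\mathrm{fac}}$ encodes, and it is justified under the moment hypotheses by truncating $q=\sum s_jf_j$ and applying dominated convergence. Since $\alpha_n$ is the moment-of-partition-products, Möbius inversion returns exactly the single-block term. This is the stated $\kappa_n$ identity.
\item \emph{Complete cumulants.} Run the same argument with ordinary powers, i.e.\ sums over all tuples including repeats. Because the immigrant process is simple and clusters are independent, coincident points arise only within a cluster, so the non-distinct sum over $C$ appears.
\end{itemize}

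\emph{Stage 3: reduction and Fourier transform.} The reduced forms follow by substituting $t_0=u+x_0$ and $\tau_r=x_r-x_0$ in the $u$-integral, which is the defining identity of $M_n^{\mathrm{red}}$ applied to product test functions; a monotone class argument then extends this to general $F$. For the Fourier transform, integrate $e^{-\ii(\omega_1\tau_1+\omega_2\tau_2)}$ against $\gamma_3^{\mathrm{red}}$. This gives
\[
\nu\,\E\sum_{x_0,x_1,x_2\in C}e^{-\ii\omega_1(x_1-x_0)-\ii\omega_2(x_2-x_0)}=\nu\,\E\,W(\omega_1)W(\omega_2)W(-\omega_1-\omega_2).
\]
This is a finite measure of total mass $\nu\E C(\R)^3$, so the interchange of sums and expectation is justified by Fubini.

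The main obstacle is the justification step in Stage 2: identifying the variational-derivative definition of $\kappa_n$ with the partition-inverted factorial moments without exponential moments. I would handle it by replacing $N$ with the cluster process whose clusters are truncated at size $m$. There the generating functionals are analytic near $0$ for bounded $q$, so the derivatives exist and the formulas hold. I would then let $m\to\infty$, using monotone and dominated convergence with the bound $\E C(\R)^n<\infty$.
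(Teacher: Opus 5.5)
Your proposal is correct. Stages 1 and 3 coincide with the paper's argument: the Poisson exponential formula gives the generating functionals, and the change of variables $t_0=u+x_0$, $\tau_r=x_r-x_0$ followed by transforming the $n=3$ complete formula gives the rest.

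The real difference is Stage 2. The paper stays on the cumulant side. It truncates the cluster size, so that the explicit formula $K^N_{\mathrm{fac}}(q)=\nu\int\{\E\prod_{y\in C}(1+q(u+y))-1\}\,du$ holds near $q=0$. It then differentiates once in each direction $f_0,\ldots,f_{n-1}$: the mixed derivative of $\prod_{y\in C}(1+\sum_j s_jf_j(u+y))$ at $s=0$ is exactly the sum over ordered distinct tuples. The truncation is removed by dominated convergence with the bound $C(\R)^n$.

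You work on the moment side instead. You expand the factorial (respectively ordinary) moment measure of $N$ according to the partition of indices induced by cluster labels. Each partition term is evaluated with the multivariate Mecke formula for the marked Poisson process $\{(U_j,C_j)\}$. The cumulant is then recovered as the single-block term by Möbius inversion; this uses the Mecke identity for every subcollection of the $f_j$, which your computation provides.

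What each route buys:
\begin{itemize}
\item The paper's route is shorter. Because $K^N_{\mathrm{fac}}$ is linear in the single-cluster functional, the cumulant appears after one differentiation.
\item Your route needs no generating functional at all if cumulants are defined as the partition inverse of the finite moment measures. It also shows structurally why integrated cluster factorial moments are exactly the connected terms, and it gives explicit integrability bounds.
\item You check that the logarithms in Stage 1 are finite; the paper only asserts this.
\item Your final cluster-size truncation is needed only to reconcile your combinatorial definition with the paper's derivative-based one. That is the same truncation step the paper uses.
\end{itemize}
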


\subsection{Reversibility and complete/factorial bispectra}

For the next result, assume that
\[
\kappa_3^{\mathrm{red}}(d\tau_1,d\tau_2)=c_3(\tau_1,\tau_2)\,d\tau_1d\tau_2,
\qquad c_3\in L^1(\R^2),
\]
and define the factorial bispectrum
\[
B_{\mathrm{fac}}(\omega_1,\omega_2)
:=
\iint_{\R^2} c_3(\tau_1,\tau_2)e^{-\ii(\omega_1\tau_1+\omega_2\tau_2)}\,d\tau_1d\tau_2.
\]
When there is no risk of confusion we write simply $B$ for $B_{\mathrm{fac}}$.

The next result is the factorial point-process analogue of the familiar
time-series implication that reversibility forces the ordinary
bispectrum  to be real-valued
\cite{HinichRothman1998}.

\begin{thm}[Reversibility gives a real factorial bispectrum]\label{thm:real-bispectrum}
If $N$ is reversible and $c_3\in L^1(\R^2)$ exists, then
\[
c_3(\tau_1,\tau_2)=c_3(-\tau_1,-\tau_2)\quad\text{a.e.},
\]
and therefore $B_{\mathrm{fac}}(\omega_1,\omega_2)\in\R$ for all $(\omega_1,\omega_2)\in\R^2$.
\end{thm}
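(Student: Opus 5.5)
The plan is to transfer reversibility of the law, $N\stackrel d=RN$, first to the unreduced factorial cumulant measure $\kappa_3$, then to its reduced version, and finally to the density $c_3$. Taking Fourier transforms then gives the conclusion.

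\emph{Step one: reflection invariance of $\kappa_3$.} For bounded compactly supported $q$ with $-1<q\le 0$, the identity $\prod_{x\in RN}(1+q(x))=\prod_{x\in N}(1+q(-x))$ together with $N\stackrel d=RN$ gives $G_{\mathrm{fac}}(q)=G_{\mathrm{fac}}(q\circ(-\cdot))$, and hence the same identity for $K_{\mathrm{fac}}$. Taking mixed derivatives $\partial_{s_0}\partial_{s_1}\partial_{s_2}$ at $s=0$ in the variational characterization of $\kappa_3$ yields
\[
\int f_0(t_0)f_1(t_1)f_2(t_2)\,\kappa_3(dt)=\int f_0(-t_0)f_1(-t_1)f_2(-t_2)\,\kappa_3(dt).
\]
A cleaner route avoids generating functionals altogether: $\alpha_3$ is defined by $\E\sum^{\neq}\prod_j f_j(x_j)$, which is invariant under reflection when $N\stackrel d=RN$. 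Since $\kappa_3$ is the standard polynomial combination of $\alpha_3$, $\alpha_2\otimes\alpha_1$ and $\alpha_1^{\otimes3}$, each of which reflects consistently, $\kappa_3$ is invariant under $t\mapsto -t$. I would present this second route, because it needs only the third-moment finiteness already implicit in the existence of $c_3$. A measure is determined by its values on such product test functions, so $\kappa_3(A)=\kappa_3(-A)$.

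\emph{Step two: passing to the reduced measure.} Insert $F(t_0,t_1,t_2)=g(t_0)h(t_1-t_0,t_2-t_0)$, with $g\ge0$ compactly supported and $\int g=1$, into the reduction identity. The left side is $\int h\,d\kappa_3^{\mathrm{red}}$. Applying reflection invariance to the same $F$ replaces it by $g(-t_0)h(-(t_1-t_0),-(t_2-t_0))$, and reducing again gives $\int h(-\tau)\,\kappa_3^{\mathrm{red}}(d\tau)$, using $\int g(-t_0)\,dt_0=1$. The main technical obstacle of the whole argument is here. The function $F$ is not compactly supported in $\R^3$, so one must either approximate $h$ by compactly supported functions and use monotone or dominated convergence (with $|c_3|\in L^1$ supplying domination), or justify the reduction identity for such $F$ directly. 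I would take $h$ continuous with compact support, where $F$ is compactly supported after all because $g$ is. With that choice the issue disappears.

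\emph{Step three: the density and the Fourier transform.} Equality of $\int h(\tau)c_3(\tau)\,d\tau$ and $\int h(\tau)c_3(-\tau)\,d\tau$ for all $h\in C_c(\R^2)$ gives $c_3(\tau)=c_3(-\tau)$ almost everywhere. Finally, $\overline{B_{\mathrm{fac}}(\omega)}=\int c_3(\tau)e^{\ii\omega\cdot\tau}\,d\tau$, since $c_3$ is real as a density of a signed measure. The substitution $\tau\mapsto-\tau$ combined with the symmetry turns this back into $B_{\mathrm{fac}}(\omega)$, so $B_{\mathrm{fac}}(\omega)\in\R$ for every $\omega$. This holds pointwise because $c_3\in L^1$ makes $B_{\mathrm{fac}}$ a continuous, everywhere-defined integral.
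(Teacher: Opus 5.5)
Your proposal is correct and follows essentially the same route as the paper: you derive reflection invariance of the full measure $\kappa_3$ from $N\stackrel d=RN$, transfer it to $\kappa_3^{\mathrm{red}}$ by testing against $g(t_0)h(t_1-t_0,t_2-t_0)$, and then conclude with a.e.\ central symmetry of $c_3$ and conjugation of the Fourier integral. The differences are minor: you justify the invariance of $\kappa_3$ explicitly through the factorial moment measures, where the paper only notes that factorial cumulants are determined by the law, and you correctly observe that $g$ need not be even, whereas the paper takes $\phi$ even.
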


\begin{rem}
The same argument applies at the level of finite signed measures: if the reduced third factorial cumulant is finite, reversibility gives central symmetry of that measure and its Fourier--Stieltjes transform is real. The $L^1$ assumption is imposed here only to speak of the bispectral density.
\end{rem}

\begin{lem}[Frequency-domain relation between complete and factorial third cumulants]\label{lem:ordinary-factorial-bispectrum}
Assume that the reduced second factorial cumulant measure and the reduced third factorial and complete cumulant measures are finite signed measures, so that the Fourier--Stieltjes transforms below are ordinary bounded functions. Let $B_{\mathrm{comp}}$ denote the Fourier--Stieltjes transform of $\gamma_3^{\mathrm{red}}$, the reduced complete, or ordinary, third cumulant measure, and let $B_{\mathrm{fac}}$ denote the transform of $\kappa_3^{\mathrm{red}}$, the reduced third factorial cumulant measure, under the same Fourier convention. When the latter measure has an $L^1$ density, $B_{\mathrm{fac}}$ is its factorial bispectrum. Then
\begin{equation}\label{eq:complete-factorial-bispectrum-relation}
B_{\mathrm{comp}}(\omega_1,\omega_2)
=
B_{\mathrm{fac}}(\omega_1,\omega_2)
+\Gamma(\omega_1)+\Gamma(\omega_2)+\Gamma(\omega_1+\omega_2)-2\lambda.
\end{equation}
In particular, since the second reduced factorial cumulant is symmetric and $\Gamma$ is real-valued,
\[
\Imm B_{\mathrm{comp}}=\Imm B_{\mathrm{fac}}.
\]
The finite-measure formulation above is the one used below; distributional variants require a separate specification of test-function classes and diagonal embeddings.
\end{lem}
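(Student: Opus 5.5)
The plan is to derive the identity at the level of unreduced measures and then reduce. Express the third complete cumulant measure $\gamma_3$ in terms of the factorial cumulant measures by differentiating \eqref{eq:ordinary-factorial-generating-functional} three times at the origin. With $q=e^{f}-1$ and $f=\sum_j s_jf_j$, the chain rule (Faà di Bruno for functionals) gives the diagonal expansion
\[
\gamma_3(dt_0,dt_1,dt_2)=\kappa_3+\big[\kappa_2\text{ on the three partial diagonals}\big]+\kappa_1\text{ on the full diagonal}.
\]
More precisely, this means the following. For test functions $f_0,f_1,f_2$, $\int f_0f_1f_2\,d\gamma_3$ equals
\[
\int f_0\otimes f_1\otimes f_2\,d\kappa_3+\int (f_0f_1)\otimes f_2\,d\kappa_2+\int (f_0f_2)\otimes f_1\,d\kappa_2+\int f_0\otimes(f_1f_2)\,d\kappa_2+\lambda\int f_0f_1f_2\,dt .
\]
The coefficients come from the expansion $e^{f}-1=f+f^2/2+f^3/6+\cdots$, which contributes the products $f_if_j$ and $f_0f_1f_2$ when mixed partials are taken. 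Under the finite-measure hypotheses, the derivatives at the origin are justified directly, as in the paper's conventions. Alternatively, for the Poisson cluster case one can read the identity off Lemma~\ref{lem:pcp-cumulant-functional} by splitting the unrestricted sum over triples $(x_0,x_1,x_2)\in C^3$ according to which indices coincide. In general I would do it combinatorially on the moment side and pass to cumulants.

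Next I will reduce each term with the stationarity convention, taking $t_0$ as base point and lags $\tau_r=t_r-t_0$. The term with $t_0=t_1$ becomes $\delta_0(d\tau_1)\,\kappa_2^{\mathrm{red}}(d\tau_2)$, and the term with $t_0=t_2$ becomes $\kappa_2^{\mathrm{red}}(d\tau_1)\,\delta_0(d\tau_2)$. The term with $t_1=t_2$ becomes the image of $\kappa_2^{\mathrm{red}}$ under $\tau\mapsto(\tau,\tau)$, and the full diagonal term becomes $\lambda\delta_{(0,0)}$. Taking Fourier--Stieltjes transforms gives $K(\omega_2)$, $K(\omega_1)$, $K(\omega_1+\omega_2)$ and $\lambda$ respectively, where $K(\omega)=\int e^{-\ii\omega\tau}\kappa_2^{\mathrm{red}}(d\tau)=\Gamma(\omega)-\lambda$. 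Summing, the transform of $\gamma_3^{\mathrm{red}}$ is
\[
B_{\mathrm{fac}}+\Gamma(\omega_1)+\Gamma(\omega_2)+\Gamma(\omega_1+\omega_2)-3\lambda+\lambda,
\]
which is \eqref{eq:complete-factorial-bispectrum-relation}. For the final claim I note that $\kappa_2^{\mathrm{red}}(A)=\kappa_2^{\mathrm{red}}(-A)$ makes $\Gamma$ real, so the correction terms are real and the imaginary parts coincide.

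The main obstacle is the bookkeeping in the first step: obtaining the exact coefficients (all equal to one) of the diagonal terms, and justifying that the reduced diagonal embeddings are finite measures whose transforms are as claimed. I would handle the first by the moment-level identity $\E\sum_{x,y,z}=\E\sum^{\neq}+$ (pairs with one coincidence) $+$ (full coincidence), applied to the factorial moment measures. Then I would check that the cumulant combination removes the lower-order products consistently: this works because the map $f\mapsto e^f-1$ acts pointwise, so the same partition structure holds for cumulants. Finiteness follows since each diagonal piece is the pushforward of a finite measure ($\kappa_2^{\mathrm{red}}$ or $\lambda\delta_0$) under a continuous linear map.
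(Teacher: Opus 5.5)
Your proposal is correct and follows the paper's proof step for step: you differentiate $K_{\mathrm{comp}}(f)=K_{\mathrm{fac}}(e^f-1)$ three times at the origin, obtaining $\kappa_3$ plus the three pair-diagonal $\kappa_2$ terms plus $\lambda\int f_0f_1f_2$, and then you reduce and transform to get $\widehat\kappa_2^{\mathrm{red}}(\omega_1)+\widehat\kappa_2^{\mathrm{red}}(\omega_2)+\widehat\kappa_2^{\mathrm{red}}(\omega_1+\omega_2)+\lambda$. The only cosmetic difference is in the $t_1=t_2$ term: you put $f_0$ in the anchor slot, so you get $\widehat\kappa_2^{\mathrm{red}}(\omega_1+\omega_2)$ directly, whereas the paper first produces $\widehat\kappa_2^{\mathrm{red}}(-\omega_1-\omega_2)$ and then invokes the symmetry of $\kappa_2^{\mathrm{red}}$.
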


The three $\Gamma$ terms in \eqref{eq:complete-factorial-bispectrum-relation} are the three possible pair diagonals, while the final $-2\lambda$ is the net effect of replacing the three $\widehat\kappa_2^{\mathrm{red}}$ contributions and the triple diagonal by full Bartlett spectra.

\subsection{A frequency-domain formula for branching-cluster third cumulants}

Let $0<m<1$, and let $h$ be a probability density on $\R$. Consider the stationary Poisson branching-cluster process on $\R$ with immigrant rate $\nu$ and offspring displacement intensity measure $m h(x)\,dx$. Put
\[
\Phi(\omega):=m\widehat h(\omega),\qquad R(\omega):=\frac{1}{1-\Phi(\omega)},\qquad \lambda:=\frac{\nu}{1-m}.
\]

Related higher-order cumulant formulae for Hawkes processes appear in \cite{JovanovicHertzRotter2015} and \cite{Privault2021}. The following derivation is included to fix the normalization and to emphasize that the formula itself does not require one-sided support.

\begin{thm}[Frequency-domain formula for the reduced complete third cumulant]\label{thm:branching-bispectrum}
For the stationary Poisson branching-cluster process on $\R$ described above, the Fourier--Stieltjes transform of the reduced complete third cumulant measure is
\begin{equation}\label{eq:branching-complete-bispectrum-R-form}
B_{\mathrm{comp}}(\omega_1,\omega_2)
=
\lambda R(\omega_1)R(\omega_2)R(\omega_3)
\{R(-\omega_1)+R(-\omega_2)+R(-\omega_3)-2\},
\qquad \omega_3:=-\omega_1-\omega_2.
\end{equation}
Equivalently,
\begin{equation}\label{eq:branching-complete-bispectrum-Q-form}
B_{\mathrm{comp}}(\omega_1,\omega_2)
=\lambda\,
\frac{1-m^2 Q(\omega_1,\omega_2)}
{|1-m\widehat h(\omega_1)|^2 |1-m\widehat h(\omega_2)|^2 |1-m\widehat h(-\omega_1-\omega_2)|^2},
\end{equation}
where
\[
Q(\omega_1,\omega_2)
:=\widehat h(-\omega_1)\widehat h(-\omega_2)
+\widehat h(\omega_1+\omega_2)
\{\widehat h(-\omega_1)+\widehat h(-\omega_2)-2m\widehat h(-\omega_1)\widehat h(-\omega_2)\}.
\]
The denominator in \eqref{eq:branching-complete-bispectrum-Q-form} is real and strictly positive. Hence
\[
\Imm B_{\mathrm{comp}}(\omega_1,\omega_2)
=
-\lambda m^2\,
\frac{\Imm Q(\omega_1,\omega_2)}
{|1-m\widehat h(\omega_1)|^2 |1-m\widehat h(\omega_2)|^2 |1-m\widehat h(-\omega_1-\omega_2)|^2}.
\]
\end{thm}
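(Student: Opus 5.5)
The plan is to apply the last display of Lemma~\ref{lem:pcp-cumulant-functional}: with $W(\omega)=\sum_{x\in C}e^{-\ii\omega x}$ for a single branching cluster $C$ rooted at $0$,
\[
B_{\mathrm{comp}}(\omega_1,\omega_2)=\nu\,\E\{W(\omega_1)W(\omega_2)W(\omega_3)\},\qquad \omega_3=-\omega_1-\omega_2 .
\]
The work is to compute this third mixed moment by a first-generation decomposition. First I would check the moment hypothesis $\E[C(\R)^3]<\infty$. The total progeny of a Galton--Watson tree with Poisson($m$) offspring and $m<1$ has all moments finite, so this holds. Next, conditioning on the root's children gives the recursion
\[
W(\omega)=1+\sum_{k} e^{-\ii\omega X_k}W_k(\omega),
\]
where the children sit at i.i.d.\ displacements $X_k\sim h$, their number is Poisson($m$), and the $W_k$ are i.i.d.\ copies of $W$, independent of the $X_k$.

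The key tool is the Poisson (Campbell--Mecke) factorial moment formula for the children. For a Poisson process of intensity $m h(x)\,dx$, the $r$-th factorial moment measure is the product $m^r h^{\otimes r}$. Write $\mu_1(\omega)=\E W(\omega)$, $\mu_2(\omega,\omega')=\E W(\omega)W(\omega')$, and $\mu_3=\E W(\omega_1)W(\omega_2)W(\omega_3)$. At first order, $\mu_1(\omega)=1+\Phi(\omega)\mu_1(\omega)$, so $\mu_1=R(\omega)$. At second order, expanding the product splits the sum over children into diagonal (same child) and off-diagonal terms:
\[
\mu_2(\omega,\omega')=\mu_1(\omega)+\mu_1(\omega')-1+\Phi(\omega+\omega')\mu_2(\omega,\omega')+\Phi(\omega)\Phi(\omega')\mu_1(\omega)\mu_1(\omega') .
\]
Solving gives $\mu_2=R(\omega+\omega')\{R(\omega)+R(\omega')-1+\Phi(\omega)\Phi(\omega')R(\omega)R(\omega')\}$. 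Since $\Phi(\omega)R(\omega)=R(\omega)-1$, this simplifies to $R(\omega+\omega')\{R(\omega)R(\omega')\}$, i.e.\ $\mu_2=R(\omega)R(\omega')R(\omega+\omega')$.

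At third order I would expand $(1+S_1)(1+S_2)(1+S_3)$, where $S_i=\sum_k e^{-\ii\omega_iX_k}W_k(\omega_i)$, and classify terms by the partition of the chosen children.
\begin{itemize}
\item If all three factors use the same child, the term is $\Phi(\omega_1+\omega_2+\omega_3)\mu_3=\Phi(0)\mu_3=m\mu_3$. This is the one term containing the unknown, so
\[
\mu_3=(1-m)^{-1}\times(\text{everything else}).
\]
The prefactor $\nu/(1-m)=\lambda$ is exactly what appears in the statement.
\item Pairs of factors using the same child produce $\Phi(\omega_i+\omega_j)\mu_2(\omega_i,\omega_j)$, possibly multiplied by a distinct-child factor $\Phi(\omega_k)\mu_1(\omega_k)$.
\end{itemize}
With the remaining single-factor and all-distinct terms included, collect everything using $\Phi R=R-1$. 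Since $\omega_1+\omega_2=-\omega_3$, each pair-diagonal term is $R(-\omega_k)$ times pieces of the triple product $R(\omega_1)R(\omega_2)R(\omega_3)$. I would organize the algebra by writing $A_i=R(\omega_i)-1=\Phi_iR_i$ and expanding everything as a polynomial in the $A_i$ and the $\Phi(-\omega_k)$. The target is \eqref{eq:branching-complete-bispectrum-R-form}.

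I expect this third-order bookkeeping to be the main obstacle. I would guard against errors in the combinatorial classification by checking the $m\to0$ limit and the first-order-in-$m$ terms against a direct two-point cluster computation.

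For the equivalent form \eqref{eq:branching-complete-bispectrum-Q-form}, I would multiply numerator and denominator by $\prod_i(1-\Phi(-\omega_i))$, using $\overline{\widehat h(\omega)}=\widehat h(-\omega)$ (since $h$ is real). The denominator then becomes $\prod_i|1-m\widehat h(\omega_i)|^2$. This is strictly positive because $|m\widehat h|\le m<1$. The numerator becomes
\[
\textstyle\sum_k\prod_{i\ne k}(1-\Phi(-\omega_i))-2\prod_i(1-\Phi(-\omega_i)).
\]
Expanding it, the constant and linear terms cancel, leaving $1-m^2Q$ after using $\widehat h(-\omega_3)=\widehat h(\omega_1+\omega_2)$. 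Finally, because the denominator is real and positive, taking imaginary parts gives the last display immediately.
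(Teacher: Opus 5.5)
Your proposal is correct and follows essentially the same route as the paper. Both use the Poisson-cluster identity $B_{\mathrm{comp}}=\nu\,\E\{W(\omega_1)W(\omega_2)W(\omega_3)\}$, the first-generation recursion with Poisson factorial moments giving $M_1=R$ and $M_2(a,b)=R(a)R(b)R(a+b)$, a third-moment equation whose all-same-child term is $\Phi(0)M_3=mM_3$, and the same conjugate-multiplication rewrite into the $Q$ form. The third-order bookkeeping you flag collapses if you group as the paper does: the no-coincidence terms multiply to $R(a)R(b)R(c)$, and each pair-coincidence term carries the factor $1+\Phi(c)R(c)=R(c)$, so $(1-m)M_3=R(a)R(b)R(c)\{1+\Phi(a+b)R(a+b)+\Phi(a+c)R(a+c)+\Phi(b+c)R(b+c)\}$, which is the $R$ form after $\Phi R=R-1$. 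One small wording fix: in the numerator expansion the linear terms cancel but the constants combine to $1$ rather than cancel.
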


For a one-sided temporal Hawkes process, $h$ is supported on $(0,\infty)$. Thus the algebra used later for one-sided kernels is not intrinsically causal; causality enters only through the support and phase of $\widehat h$.

\begin{cor}[Centrally symmetric branching kernels have real third-order spectra]\label{cor:symmetric-branching-real}
In the setting of Theorem~\ref{thm:branching-bispectrum}, let $C$ denote the rooted branching cluster. Suppose $h(x)=h(-x)$ a.e. Then the branching-cluster law satisfies $C\stackrel d=-C$, the stationary process is reversible, and both the complete and factorial third-order Fourier--Stieltjes transforms are real-valued. The relevant finite moments hold because the subcritical Poisson Galton--Watson cluster has finite third moment. Indeed, when the offspring distribution is Poisson with mean \(m<1\), the total progeny has the Borel distribution, which has finite exponential moments in a neighbourhood of the origin; see Dwass~\cite{dwass1969total}. If the reduced third factorial cumulant has an $L^1(\R^2)$ density, then the factorial bispectral density is real. The complete third cumulant still contains the usual diagonal singular components; by Lemma~\ref{lem:ordinary-factorial-bispectrum}, their Fourier--Stieltjes contribution is real.
\end{cor}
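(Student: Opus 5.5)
The plan is to prove $C\stackrel d=-C$ first, deduce reversibility of the stationary process, and then read off real-valuedness from Theorem~\ref{thm:real-bispectrum}, Lemma~\ref{lem:ordinary-factorial-bispectrum}, and Theorem~\ref{thm:branching-bispectrum}.

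\textbf{Step 1: the cluster is symmetric in law.} Realize the rooted cluster $C$ as a Poisson Galton--Watson tree with offspring mean $m$. Attach to each edge an i.i.d.\ displacement with density $h$, independent of the tree. A vertex is then located at the sum of the displacements along its ancestral path from the root at $0$. The map $x\mapsto -x$ sends each displacement $D$ to $-D$. Since $h(x)=h(-x)$ a.e., $-D\stackrel d=D$. Because the displacements are independent of the genealogy and of each other, replacing every displacement by its negative leaves the joint law of (tree, displacements) unchanged. Hence $-C\stackrel d=C$.

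\textbf{Step 2: the stationary process is reversible.} Write $N=\sum_j\vartheta_{U_j}C_j$, where the immigrant process $\{U_j\}$ is homogeneous Poisson. The reflected process is $RN=\sum_j\vartheta_{-U_j}(-C_j)$. The point set $\{-U_j\}$ is again Poisson with rate $\nu$, and the marks $-C_j$ are i.i.d.\ copies of $-C\stackrel d=C$, independent of the immigrants. By the marking theorem, $RN\stackrel d=N$. Alternatively, one can compare the generating functionals in Lemma~\ref{lem:pcp-cumulant-functional} under $q\mapsto q(-\cdot)$ and substitute $u\mapsto-u$.

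\textbf{Step 3: finite moments.} The total progeny of a Poisson$(m)$ Galton--Watson tree with $m<1$ is Borel distributed. This law has an exponential moment near $0$, as cited from Dwass, so $\E[C(\R)^3]<\infty$. Lemma~\ref{lem:pcp-cumulant-functional} then applies with $n=3$, and the reduced measures $\kappa_3^{\mathrm{red}}$ and $\gamma_3^{\mathrm{red}}$ are finite. Their total variation is bounded by $\nu\,\E[C(\R)^3]$. Likewise $\kappa_2^{\mathrm{red}}$ is finite.

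\textbf{Step 4: real-valuedness.} By the explicit form of $\kappa_3^{\mathrm{red}}$ in Lemma~\ref{lem:pcp-cumulant-functional} and $C\stackrel d=-C$, the measure $\kappa_3^{\mathrm{red}}$ is invariant under $\tau\mapsto-\tau$. Hence its Fourier--Stieltjes transform equals its own complex conjugate, so it is real. This is the measure version of Theorem~\ref{thm:real-bispectrum} noted in the Remark. The same argument applied to $\gamma_3^{\mathrm{red}}$, including its diagonal atoms, gives a real $B_{\mathrm{comp}}$. Equivalently, Lemma~\ref{lem:ordinary-factorial-bispectrum} shows the diagonal contributions $\Gamma(\omega_1)+\Gamma(\omega_2)+\Gamma(\omega_1+\omega_2)-2\lambda$ are real, since $\Gamma$ is real. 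As a consistency check via Theorem~\ref{thm:branching-bispectrum}: $\widehat h$ is real and even, so $R(-\omega)=R(\omega)$ is real, and the $R$-form of $B_{\mathrm{comp}}$ is visibly real. If $c_3\in L^1$ exists, Theorem~\ref{thm:real-bispectrum} gives a real factorial bispectral density directly.

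The main care point is Step 1. One must make sure the symmetry argument uses independence of displacements from the genealogy, and that the conclusion concerns the law of $C$ as a counting measure rather than just its marginal point positions. Everything else is bookkeeping with the results already established.
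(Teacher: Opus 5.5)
Your proposal is correct and follows essentially the same route as the paper. The shared steps are: reflect every edge displacement to get $C\stackrel d=-C$; use reflection invariance of the Poisson immigrants for reversibility; take finiteness of the relevant moments from the Borel/Dwass bound; and read off realness from the finite-measure version of Theorem~\ref{thm:real-bispectrum}, Lemma~\ref{lem:ordinary-factorial-bispectrum}, and the $R$-form in Theorem~\ref{thm:branching-bispectrum}. Your only variation is a harmless shortcut: you read central symmetry of $\kappa_3^{\mathrm{red}}$ and $\gamma_3^{\mathrm{red}}$ directly off the cluster formula, rather than passing through process-level reversibility.
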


\begin{rem}[Symmetry is about the origin]
The symmetry in Corollary~\ref{cor:symmetric-branching-real} is central symmetry under $x\mapsto -x$. A delay density that is symmetric around a nonzero midpoint, for example the one-sided uniform density $a^{-1}\1_{(0,a)}$, is not symmetric in this sense. This distinction matters later when we discuss exceptional one-sided kernels.
\end{rem}

\begin{prop}[Continuous-displacement branching clusters lie in the $L^1$ regime]\label{prop:L1-branching}
Let $C$ be a rooted branching cluster with a.s. finite genealogy, $\E[C(\R)^3]<\infty$, and conditionally independent edge displacements admitting Lebesgue densities on $\R$. Then the reduced third factorial cumulant measure of the stationary Poisson cluster process generated by $C$ is absolutely continuous with some density $c_3\in L^1(\R^2)$. The same conclusion applies, in particular, to one-sided branching clusters whose edge displacements have densities on $(0,\infty)$.
\end{prop}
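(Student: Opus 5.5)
By Lemma~\ref{lem:pcp-cumulant-functional}, the reduced third factorial cumulant measure is
\[
\kappa_3^{\mathrm{red}}(d\tau_1,d\tau_2)=\nu\,\E\sum^{\neq}_{x_0,x_1,x_2\in C}\delta_{(x_1-x_0,\,x_2-x_0)}(d\tau_1,d\tau_2).
\]
Its total variation is $\nu\,\E(C(\R))_3<\infty$, so it is a finite positive measure. It therefore suffices to prove that it is absolutely continuous: a finite measure with a density automatically has $c_3\in L^1(\R^2)$. I will condition on the genealogy. Let $\mathcal T$ be the random finite rooted tree of $C$. Conditionally on $\mathcal T$, each vertex position is the sum of the edge displacements $D_e$ along the path from the root, and these $D_e$ are independent with densities $g_e$. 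The densities may depend on the tree, e.g. on type or generation; all that is needed is conditional independence and absolute continuity. Writing the expectation as $\E\sum_{(v_0,v_1,v_2)}^{\neq}\PP\big((X_{v_1}-X_{v_0},X_{v_2}-X_{v_0})\in\cdot\mid\mathcal T\big)$, with the sum over ordered triples of distinct vertices of $\mathcal T$, it is enough to show that for every fixed finite tree and every such triple the conditional law of $(X_{v_1}-X_{v_0},X_{v_2}-X_{v_0})$ is absolutely continuous on $\R^2$. A countable sum of absolutely continuous measures with finite total mass is absolutely continuous, by monotone convergence applied to Lebesgue-null sets.

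The key step is the structure of two differences among three distinct vertices. For vertices $v,w$, the difference $X_w-X_v$ equals $\sum_{e\in P(v\to w)}\pm D_e$, where $P$ is the edge set of the tree path between them. Signs are $+$ on the edges going down to $w$ and $-$ on those going up from $v$. Consider the three paths $P_{01}, P_{02}, P_{12}$ between the three distinct vertices. In a tree they meet at a median vertex $c$, so $P_{0j}=A_0\sqcup A_j$, where $A_i$ is the path from $v_i$ to $c$. At most one of the $A_i$ is empty, since the vertices are distinct. I then split into cases.

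\emph{Case 1: $A_1$ and $A_2$ are both nonempty.} Then $(\tau_1,\tau_2)=(S_0+S_1,\,S_0+S_2)$, where $S_i$ is the signed sum over $A_i$. $S_1$ and $S_2$ are independent, each is a sum of independent absolutely continuous variables and hence absolutely continuous, and both are independent of $S_0$. Conditionally on $S_0$, the pair is a translate of the product law of $(S_1,S_2)$, which is absolutely continuous on $\R^2$. Integrating over $S_0$ preserves this.

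\emph{Case 2: $A_1=\emptyset$, i.e.\ $v_1=c$.} Then $A_0$ and $A_2$ are nonempty and $(\tau_1,\tau_2)=(S_0,S_0+S_2)$. This is an invertible linear image of the independent absolutely continuous pair $(S_0,S_2)$, so it is absolutely continuous.

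\emph{Case 3: $A_2=\emptyset$.} This is symmetric to Case 2.

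The main obstacle is making this path decomposition precise, in particular the sign bookkeeping and the fact that the median vertex gives disjoint edge sets. Disjointness is essential: it is what gives independence of $S_0,S_1,S_2$, and it uses crucially that the genealogy is a tree. I will also check measurability when the conditional law is averaged over the random tree, which is routine since there are only countably many finite trees.

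For one-sided clusters, edge densities supported on $(0,\infty)$ are a special case of the hypothesis. So the final sentence of the proposition follows immediately, and the Poisson--Galton--Watson case of Corollary~\ref{cor:symmetric-branching-real} supplies $\E[C(\R)^3]<\infty$.
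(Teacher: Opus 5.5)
Your proof is correct and shares the paper's outer structure: Lemma~\ref{lem:pcp-cumulant-functional}, conditioning on the countably many finite labelled genealogies, reducing to a single ordered triple of distinct vertices, and summing with total mass $\nu\,\E[(C(\R))_3]$. The key step, however, is argued differently.

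The paper treats $X_{\mathbf v}=(T_{v_1}-T_{v_0},T_{v_2}-T_{v_0})$ as a linear image of the whole edge-displacement vector. It asserts, in one line, that the two signed path-incidence vectors are linearly independent in a tree, so the map has rank two. It then invokes the general fact that a surjective linear image of a vector with a joint Lebesgue density has a density. It also writes $c_3$ explicitly as $\nu\,\E\sum^{\neq}q^{\mathcal T}_{\mathbf v}$ and computes $\|c_3\|_1$ by Tonelli, rather than appealing to Radon--Nikodym after proving absolute continuity.

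Your median-vertex decomposition into three edge-disjoint legs $A_0,A_1,A_2$ replaces the rank computation with independence. It needs only three elementary facts: convolving with an absolutely continuous law gives an absolutely continuous law, products of absolutely continuous laws are absolutely continuous, and invertible linear maps preserve absolute continuity.

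The trade-off is as follows. The paper's route is shorter and uses only joint absolute continuity of the displacements on a fixed tree, so it would survive dependence between edges that still admit a joint density. Your route uses conditional independence essentially. In exchange, it makes explicit the tree structure behind the paper's terse rank-two claim: the edge-disjoint legs are precisely why the two path vectors cannot be proportional.
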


\subsection{Sign symmetrization and matched second order}

The final construction in this section is the generic orientation-erasing device. Let $C$ be a random finite simple counting measure on $\R$ such that
\[
C(\{0\})=1\ \text{a.s.},\qquad
\mathrm{supp}(C)\subset[0,\infty)\ \text{a.s.},\qquad
\E[C(\R)^3]<\infty.
\]
Let $\{U_j\}$ be a homogeneous Poisson process on $\R$ with rate $\nu>0$, independent of i.i.d. copies $\{C_j\}$ of $C$. For $s\in\{\pm1\}$ and a counting measure $\mu$, write $s\mu$ for the pushforward of $\mu$ under $x\mapsto sx$, that is,
\[
(s\mu)(A):=\mu(sA),\qquad A\subset\R\ \text{Borel}.
\]
For $\theta\in[-1,1]$, let $\{S_{j,\theta}\}_j$ be i.i.d. signs with
\[
\PP(S_{j,\theta}=+1)=\frac{1+\theta}{2},\qquad
\PP(S_{j,\theta}=-1)=\frac{1-\theta}{2},
\]
independent of $\{U_j,C_j\}_j$, and set
\[
C_{j,\theta}:=S_{j,\theta}C_j,
\qquad
N_\theta:=\sum_j \vartheta_{U_j}C_{j,\theta}.
\]
Thus $N_1$ is the forward one-sided cluster process, $N_{-1}$ is its reversed orientation, and $N_0$ is the sign-symmetrized reversible null. This is a per-cluster sign randomization: each immigrant cluster is independently flipped or not flipped. Equivalently, the cluster law $\Law(C_{1,\theta})$ is a convex combination of $\Law(C)$ and $\Law(-C)$, but $N_\theta$ is not, in general, a process-level convex mixture of $N_1$ and $RN_1$.

\begin{prop}[Matched sign-symmetrized family]\label{prop:sign-symmetrized-family}
For every $\theta\in[-1,1]$, $N_\theta$ has intensity
\[
\lambda=\nu\,\E[C(\R)],
\]
and the same reduced second factorial cumulant measure, hence the same full Bartlett spectrum
\[
\Gamma(\omega)=\nu\,\E |\widehat C(\omega)|^2,
\qquad
\widehat C(\omega):=\int_\R e^{-\ii\omega x}\,C(dx).
\]
Moreover, $N_{-1}\stackrel d=RN_1$ and $N_0$ is reversible. If the reduced third factorial cumulant density of $N_1$ exists and belongs to $L^1(\R^2)$, then
\[
c_{3,-1}(\tau)=c_{3,1}(-\tau)\quad\text{a.e.},
\]
and
\[
c_{3,\theta}=c_3^{\mathrm e}+\theta c_3^{\mathrm o},
\qquad
B_\theta=\Ree B_1+\ii\theta\,\Imm B_1,
\]
where
\[
c_3^{\mathrm e}(\tau):=\tfrac12(c_{3,1}(\tau)+c_{3,1}(-\tau)),
\qquad
c_3^{\mathrm o}(\tau):=\tfrac12(c_{3,1}(\tau)-c_{3,1}(-\tau)).
\]
\end{prop}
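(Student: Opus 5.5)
The plan is to read everything off Lemma~\ref{lem:pcp-cumulant-functional} applied to the cluster $C_\theta:=S_\theta C$, where $S_\theta$ is an independent sign with the stated law. The hypothesis $\E[C(\R)^3]<\infty$ is inherited by $C_\theta$ because $C_\theta(\R)=C(\R)$.

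\textbf{Intensity and second order.} The first-order identity gives $\lambda=\nu\,\E C_\theta(\R)=\nu\,\E C(\R)$ for every $\theta$. For second order, the lemma gives
\[
\kappa_{2,\theta}^{\mathrm{red}}=\nu\,\E\sum^{\neq}_{x_0,x_1\in C_\theta}\delta_{x_1-x_0}.
\]
Conditioning on the sign and writing $\mu_C:=\E\sum^{\neq}_{x_0,x_1\in C}\delta_{x_1-x_0}$, this equals
\[
\nu\Big\{\tfrac{1+\theta}{2}\,\mu_C+\tfrac{1-\theta}{2}\,\mu_C(-\,\cdot)\Big\}.
\]
The ordered-pair sum over distinct labels is invariant under swapping $x_0$ and $x_1$, so $\mu_C$ is centrally symmetric. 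Hence $\kappa_{2,\theta}^{\mathrm{red}}$ does not depend on $\theta$. Adding the diagonal $\lambda\delta_0$ gives the same $\gamma_2^{\mathrm{red}}$ for all $\theta$. Its transform is
\[
\nu\,\E\sum_{x_0,x_1}e^{-\ii\omega(x_1-x_0)}=\nu\,\E|\widehat C(\omega)|^2,
\]
and this expression is itself unchanged when $C$ is replaced by $-C$, since the modulus is unaffected.

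\textbf{Reversal identities.} Since $R\vartheta_U(sC)=\vartheta_{-U}(-sC)$ and $\{-U_j\}$ is again a rate-$\nu$ Poisson process independent of the clusters, we get $RN_\theta\stackrel d=N_{-\theta}$. This single identity yields both $N_{-1}\stackrel d=RN_1$ and the reversibility of $N_0$.

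\textbf{Third order.} The same conditioning applied to the third reduced factorial cumulant formula gives
\[
\kappa_{3,\theta}^{\mathrm{red}}=\tfrac{1+\theta}{2}\,\kappa_{3,1}^{\mathrm{red}}+\tfrac{1-\theta}{2}\,\kappa_{3,-1}^{\mathrm{red}}.
\]
Flipping the cluster sends $(x_r-x_0)_r$ to $-(x_r-x_0)_r$, so $\kappa_{3,-1}^{\mathrm{red}}$ is the pushforward of $\kappa_{3,1}^{\mathrm{red}}$ under $\tau\mapsto-\tau$. Consequently, if $c_{3,1}\in L^1$ exists, then $c_{3,-1}(\tau)=c_{3,1}(-\tau)$ a.e., and $\kappa_{3,\theta}^{\mathrm{red}}$ has the density $\tfrac{1+\theta}{2}c_{3,1}(\tau)+\tfrac{1-\theta}{2}c_{3,1}(-\tau)$. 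Regrouping this density gives $c_3^{\mathrm e}+\theta c_3^{\mathrm o}$.

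\textbf{Fourier step.} Because $c_{3,1}$ is real, the transform of $c_{3,1}(-\cdot)$ is $\overline{B_1}$. Therefore $\widehat{c_3^{\mathrm e}}=\Ree B_1$ and $\widehat{c_3^{\mathrm o}}=\ii\,\Imm B_1$, which gives $B_\theta=\Ree B_1+\ii\theta\,\Imm B_1$.

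\textbf{Main difficulty.} The only delicate point is the conditioning step: the lemma is stated for a fixed cluster law, and I apply it to the mixture law $\Law(C_\theta)$. This is justified because the sum over $\neq$-tuples is linear in the cluster law, so the expectation splits over the two sign values. It also requires checking that the $L^1$ density of $\kappa_{3,1}^{\mathrm{red}}$ transfers to $\kappa_{3,\theta}^{\mathrm{red}}$, which holds because reflection preserves absolute continuity. Everything else is bookkeeping.
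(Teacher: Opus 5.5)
Your proposal is correct and follows essentially the same route as the paper. Both arguments use linearity of the Lemma~\ref{lem:pcp-cumulant-functional} cluster-sum formulas in the cluster law, reflection invariance of the ordered pair-difference measure, invariance of the immigrant Poisson process under reflection, and the conjugation identity for the transform of $c_{3,1}(-\cdot)$; your single identity $RN_\theta\stackrel d=N_{-\theta}$ is just a tidier packaging of the paper's separate treatment of $\theta=\pm1$ and $\theta=0$.
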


\section{A reversible spectral match for monotone kernels}

Section~2 already gave a generic orientation-erased comparison by independently reflecting whole clusters. The aim here is different and stronger: within the branching setting itself, we construct an explicit reversible branching-cluster model that matches the full Bartlett spectrum of a broad class of one-sided kernels.

Let $0<m<1$ and $h:\mathbb{R}\rightarrow[0,\infty)$ be a probability density satisfying $h(x)=0$ when $x<0$. Write $\check h(x):=h(-x)$. Since \(h(x)=0\) for \(x<0\), the Fourier convention above gives
\[
\widehat h(\omega)=\int_0^\infty e^{-\ii\omega t}h(t)\,dt.
\]
The stationary one-sided branching model with immigrant rate $\nu$ and offspring density $mh$ has total intensity $\lambda=\nu/(1-m)$ and full Bartlett spectrum
\[
f_h(\omega)=\frac{\lambda}{|1-m\widehat h(\omega)|^2}.
\]
This is the preceding Bartlett spectrum $\Gamma$ specialized to the one-sided branching model; see \cite{daley2003introduction,Hawkes1971} for Hawkes spectra and point-process spectral conventions.

\begin{thm}[Reversible spectral match for monotone kernels]\label{thm:monotone-match}
Assume that $h$ has a nonincreasing representative on $(0,\infty)$. Define $\rho_h$ for a.e. $x\in\mathbb{R}$ by
\begin{equation}\label{eq:rho-h-def}
\rho_h(x):=\frac{h(|x|)-m(h*\check h)(x)}{2-m}.
\end{equation}
Then $\rho_h$ admits an even probability-density representative on $\mathbb{R}$; in particular, $\widehat{\rho_h}$ is real-valued. Let
\begin{equation}\label{eq:pn-def}
p_n:=\frac{(2n-2)!}{2^{2n-1}n!(n-1)!}\,\frac{[m(2-m)]^n}{m},\qquad n\ge1.
\end{equation}
Then $\sum_{n\ge1}p_n=1$. If $K$ has law $\PP(K=n)=p_n$, if $Y_1,Y_2,\ldots$ are i.i.d. with any even density representative of $\rho_h$, if $K$ is independent of $Y_1,Y_2,\ldots$, and if
\[
Y=\sum_{k=1}^K Y_k,
\]
then $Y$ has an even density $\varphi_h$ and
\begin{equation}\label{eq:varphi-transform}
\widehat{\varphi_h}(\omega)=\frac{1-\sqrt{1-m(2-m)\widehat{\rho_h}(\omega)}}{m},
\end{equation}
where the square root is the positive real square root. Indeed, since $\rho_h$ is even, $\widehat{\rho_h}$ is real-valued, and
\[
1-m(2-m)\widehat{\rho_h}(\omega)
=
|1-m\widehat h(\omega)|^2
\ge (1-m)^2>0,
\]
so the branch is unambiguous. Moreover,
\begin{equation}\label{eq:spectral-match}
|1-m\widehat{\varphi_h}(\omega)|^2=|1-m\widehat h(\omega)|^2.
\end{equation}
Consequently, because $\varphi_h$ is even, the stationary branching-cluster process with the same immigrant rate $\nu$ and offspring density $m\varphi_h$ is reversible, and hence serves as an undirected comparison null in our sense. It has the same total intensity and full Bartlett spectrum as the one-sided model with offspring density $mh$. In the subcritical branching setting the complete reduced covariance measures are finite, so uniqueness of Fourier--Stieltjes transforms implies that these complete reduced covariance measures also agree. In addition, Corollary~\ref{cor:symmetric-branching-real} shows that the reversible match has real complete and factorial third-order Fourier--Stieltjes transforms. If the reduced third factorial cumulant has an $L^1(\R^2)$ density, then its factorial bispectral density is real; the complete third cumulant differs only by the usual real diagonal Fourier--Stieltjes terms from Lemma~\ref{lem:ordinary-factorial-bispectrum}.
\end{thm}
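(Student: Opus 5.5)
The plan is to work entirely on the Fourier side and then verify the two positivity facts the construction needs. First I would compute $\widehat{\rho_h}$. Since $h(|x|)=h(x)+\check h(x)$ for a.e. $x$ (the two pieces have disjoint supports up to the origin), and $\widehat{\check h}=\overline{\widehat h}$, $\widehat{h*\check h}=|\widehat h|^2$, we get
\[
(2-m)\widehat{\rho_h}(\omega)=2\Ree\widehat h(\omega)-m|\widehat h(\omega)|^2 .
\]
Hence
\[
1-m(2-m)\widehat{\rho_h}=1-2m\Ree\widehat h+m^2|\widehat h|^2=|1-m\widehat h|^2 .
\]
Integrating at $\omega=0$ gives $\int\rho_h=(2-1\cdot m)/(2-m)\cdot$ appropriately, namely $(2-m)^{-1}(2-m)=1$. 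Evenness is immediate, because $h(|x|)$ is even and $h*\check h$ is even. The lower bound $|1-m\widehat h|\ge 1-m$ follows from $|\widehat h|\le1$.

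\textbf{Main obstacle: nonnegativity of $\rho_h$.} This is the step I expect to be hardest, and it is where monotonicity enters. By evenness it suffices to take $x>0$, where we need
\[
h(x)\ge m\,(h*\check h)(x)=m\int_0^\infty h(x+t)h(t)\,dt .
\]
Since $h$ is nonincreasing, $h(x+t)\le h(x)$ for $t\ge0$, so
\[
\int_0^\infty h(x+t)h(t)\,dt\le h(x)\int h=h(x).
\]
Because $m<1$, this gives $\rho_h\ge0$ a.e.; choosing the monotone representative makes the statement hold pointwise for $x\neq0$.

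Next I would handle the compound sum. The numbers $p_n$ are the Taylor coefficients of $1-\sqrt{1-z}=\sum_{n\ge1}\frac{(2n-2)!}{2^{2n-1}n!(n-1)!}z^n$, with all coefficients nonnegative. Take $z=m(2-m)\in(0,1)$, noting $1-z=(1-m)^2$. Then
\[
\sum_n p_n=\frac{1-(1-m)}{m}=1,
\]
so the $p_n$ form a probability law.

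The characteristic function of $Y$ is $\sum_n p_n\widehat{\rho_h}^n$. Since $|\widehat{\rho_h}|\le1$ and $|z\widehat{\rho_h}|<1$, the power series converges absolutely, which gives \eqref{eq:varphi-transform} with the principal branch. That branch agrees with the positive root because the argument $|1-m\widehat h|^2$ is positive. The law of $Y$ has density
\[
\varphi_h=\sum_n p_n\rho_h^{*n},
\]
which is an $L^1$ convergent mixture of even densities, so $\varphi_h$ is even and $\widehat{\varphi_h}$ is real.

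Finally, for the spectral match, \eqref{eq:varphi-transform} gives
\[
1-m\widehat{\varphi_h}=\sqrt{|1-m\widehat h|^2}=|1-m\widehat h|,
\]
which is real and positive. Squaring gives \eqref{eq:spectral-match}.

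The remaining conclusions then follow from earlier results:
\begin{itemize}
\item \emph{Intensity.} The total intensity is $\nu/(1-m)$ for both models.
\item \emph{Bartlett spectrum.} The spectrum $\lambda/|1-m\widehat{\,\cdot\,}|^2$ agrees between the two models. This is the $\Gamma$ of the branching cluster via Lemma~\ref{lem:pcp-cumulant-functional}.
\item \emph{Covariance measures.} The complete reduced covariance measures are finite and have equal Fourier--Stieltjes transforms, so they coincide by uniqueness of the transform.
\item \emph{Reversibility and real third-order transforms.} These come from Corollary~\ref{cor:symmetric-branching-real} applied to the even density $\varphi_h$. The real-density statement in the $L^1$ case comes from Theorem~\ref{thm:real-bispectrum} and Lemma~\ref{lem:ordinary-factorial-bispectrum}.
\end{itemize}
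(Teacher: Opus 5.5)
Your proposal is correct and follows essentially the same route as the paper. The shared steps are: nonnegativity of $\rho_h$ from $h(x+t)\le h(x)$ for the monotone representative; the identity $1-m(2-m)\widehat{\rho_h}=|1-m\widehat h|^2$ via $\widehat{h(|\cdot|)}=2\Ree\widehat h$ and $\widehat{h*\check h}=|\widehat h|^2$; the Maclaurin series of $1-\sqrt{1-z}$ at $z=m(2-m)$ for $\varphi_h=\sum_n p_n\rho_h^{*n}$; and Corollary~\ref{cor:symmetric-branching-real} for reversibility and real third-order transforms. Your normalization sentence is garbled as written, but its intended content, $\widehat{\rho_h}(0)=(2-m)/(2-m)=1$, is right.
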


Two immediate corollaries are worth recording. For the exponential kernel $h(t)=\beta e^{-\beta t}\1_{\{t>0\}}$, one gets
\[
\rho_h(x)=\frac{\beta}{2}e^{-\beta|x|},
\]
so the theorem recovers the Laplace building block from the exponential case. For the Lomax family
\[
h_\alpha(t)=\alpha(1+t)^{-1-\alpha}\1_{\{t>0\}},\qquad \alpha>0,
\]
the same construction yields an explicit reversible spectral match with
\[
\frac{1-m}{2-m}h_\alpha(|x|)\le \rho_{h_\alpha}(x)\le \frac{1}{2-m}h_\alpha(|x|),
\]
so the symmetric building block $\rho_{h_\alpha}$ has the same polynomial tail order as the original one-sided kernel. This is the heavy-tailed input used in the random-sum construction of $\varphi_h$; we do not need a separate tail asymptotic for the final offspring density here.

\begin{cor}\label{cor:L1-undirected-match}
The reversible branching-cluster process in Theorem~\ref{thm:monotone-match} has a reduced third factorial cumulant density in $L^1(\R^2)$.
\end{cor}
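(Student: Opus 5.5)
The plan is to deduce the corollary directly from Proposition~\ref{prop:L1-branching}, so all that needs checking are its three hypotheses for the reversible branching cluster with offspring density $m\varphi_h$ from Theorem~\ref{thm:monotone-match}. This cluster is a rooted Poisson Galton--Watson tree with offspring mean $m<1$, each edge carrying an independent displacement with density $\varphi_h$.

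The first two hypotheses are immediate. Since $m<1$, the genealogy is subcritical and hence a.s. finite. The total progeny $C(\R)$ has the Borel distribution, which has exponential moments near the origin by Dwass \cite{dwass1969total}, exactly as in Corollary~\ref{cor:symmetric-branching-real}. In particular $\E[C(\R)^3]<\infty$.

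The third hypothesis is that edge displacements are conditionally independent with Lebesgue densities on $\R$. Independence holds because in the Poisson branching construction with offspring intensity $m\varphi_h(x)\,dx$, given the genealogy, the displacements of the children of each vertex are i.i.d. with law $\varphi_h(x)\,dx$, independent across vertices. The remaining point is that $\varphi_h$ is a genuine probability density rather than merely a measure with the transform \eqref{eq:varphi-transform}. This is supplied by Theorem~\ref{thm:monotone-match}:
\begin{itemize}
\item $\rho_h$ admits an even probability-density representative;
\item $K\ge1$ a.s. and $\sum_n p_n=1$;
\item so $Y=\sum_{k=1}^K Y_k$ is a countable mixture $\sum_n p_n\rho_h^{*n}$ of convolution powers of a density, and is therefore absolutely continuous with density $\varphi_h=\sum_{n\ge1}p_n\rho_h^{*n}$.
\end{itemize}
If one prefers not to rely on the statement of the theorem for absolute continuity, I would verify it directly. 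Each $\rho_h^{*n}$ is a probability density. Monotone convergence shows the mixture integrates to one. Because $K\ge1$, there is no atom at zero, so the law has no singular part.

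With all hypotheses of Proposition~\ref{prop:L1-branching} in place, the reduced third factorial cumulant measure of the stationary process with immigrant rate $\nu$ and offspring density $m\varphi_h$ is absolutely continuous. Its density $c_3$ lies in $L^1(\R^2)$, since its total mass is bounded by $\nu\,\E[(C(\R))_3]<\infty$ via Lemma~\ref{lem:pcp-cumulant-functional}.

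I expect no real obstacle. The only step needing care is confirming that $\varphi_h$ is a density, which uses $p_0=0$. As a consequence, Theorem~\ref{thm:real-bispectrum} and Corollary~\ref{cor:symmetric-branching-real} show that the factorial bispectrum of this reversible match is real.
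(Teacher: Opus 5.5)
Your proposal is correct and follows the paper's proof: you check the hypotheses of Proposition~\ref{prop:L1-branching} (subcritical Poisson genealogy, finite third moment of the Borel-distributed progeny via Dwass, and edge displacements with density $\varphi_h$) and then apply it. Your extra verification that $\varphi_h=\sum_{n\ge1}p_n\rho_h^{*n}$ is a genuine density is a harmless elaboration of what the paper takes from Theorem~\ref{thm:monotone-match}; note, though, that the operative reason is that a countable mixture of absolutely continuous laws is absolutely continuous, since the absence of an atom at zero would not by itself rule out a singular part.
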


\begin{proof}
The associated Galton--Watson genealogy is subcritical with Poisson
offspring mean \(m<1\). Its total progeny has the Borel law and hence
finite third moment; see Dwass~\cite{dwass1969total}. The edge
displacements have the Lebesgue density \(\varphi_h\) on \(\R\).
Proposition~\ref{prop:L1-branching} therefore applies.
\end{proof}

\section{Observable contrasts and non-vacuity}

This section has two complementary aims. We first turn the population odd third-order component into observable finite-window orientation contrasts. We then show that the corresponding population signal is genuinely present for broad one-sided kernels.

\subsection{Finite-window orientation contrasts}

The finite-window diagnostics corresponding to the preceding results are orientation contrasts. For an anchor event $x_0$, the ordered triple-lag pattern $(\tau_1,\tau_2)$ is compared with its reversed orientation $(-\tau_1,-\tau_2)$.

Let $g:\R^2\to\R$ be bounded Borel, compactly supported, and jointly odd, i.e.
\[
g(-\tau_1,-\tau_2)=-g(\tau_1,\tau_2).
\]
Define
\[
\mathcal O_{T,g}
:=
\frac{1}{T}
\sum_{x_0,x_1,x_2\in N\cap[0,T]}^{\neq}
g(x_1-x_0,x_2-x_0).
\]

For bounded compactly supported $f:\R^2\to\R$, the order-three factorial moment--cumulant expansion under the reduced-measure convention above is
\begin{align}
\iint f\,d\alpha_{3,\theta}^{\mathrm{red}}
&=
\lambda^3\iint f(\tau_1,\tau_2)\,d\tau_1d\tau_2 
+\lambda\iint f(\tau_1,\tau_2)\,\kappa_2^{\mathrm{red}}(d\tau_1)\,d\tau_2 \notag\\
&\quad+\lambda\iint f(\tau_1,\tau_2)\,d\tau_1\,\kappa_2^{\mathrm{red}}(d\tau_2)
+\lambda\iint f(v,v+u)\,dv\,\kappa_2^{\mathrm{red}}(du) \notag\\
&\quad+\iint f\,d\kappa_{3,\theta}^{\mathrm{red}}.
\label{eq:alpha3-red-expansion}
\end{align}
To derive \eqref{eq:alpha3-red-expansion}, start from the order-three factorial moment--cumulant partition identity in full coordinates, as in Brillinger's cumulant formalism for stationary point processes \cite{Brillinger1975}:
\[
\alpha_3
=
\kappa_1^{\otimes3}
+\kappa_2^{01}\otimes\kappa_1^2
+\kappa_2^{02}\otimes\kappa_1^1
+\kappa_2^{12}\otimes\kappa_1^0
+\kappa_3,
\]
where the superscripts indicate the coordinate blocks, not powers; for example, $\kappa_2^{01}\otimes\kappa_1^2$ acts with $\kappa_2$ on $(t_0,t_1)$ and $\kappa_1$ on $t_2$.
Testing this full-coordinate identity against
\[
F(t_0,t_1,t_2)=\phi(t_0)f(t_1-t_0,t_2-t_0),
\qquad \phi\in C_c(\R),\quad \int \phi=1,
\]
and then using stationarity gives the reduced-coordinate expansion.
The two pair blocks involving the anchor \(t_0\) give
\[
\lambda\iint f(\tau_1,\tau_2)\,
\kappa_2^{\mathrm{red}}(d\tau_1)\,d\tau_2
\quad\text{and}\quad
\lambda\iint f(\tau_1,\tau_2)\,
d\tau_1\,\kappa_2^{\mathrm{red}}(d\tau_2).
\]
The remaining pair block, corresponding to \((t_1,t_2)\), gives, with
\(u=t_2-t_1\) and \(v=t_1-t_0\),
\[
\lambda\iint f(v,v+u)\,dv\,\kappa_2^{\mathrm{red}}(du).
\]
The product block and the connected block give respectively the first
and last terms in \eqref{eq:alpha3-red-expansion}.

The first four terms depend only on the common intensity and common second-order structure from Proposition~\ref{prop:sign-symmetrized-family}, while the connected term $\kappa_{3,\theta}^{\mathrm{red}}$ carries the sign dependence. For a fixed anchor these disconnected contributions need not vanish termwise against an odd $g$; rather, their common contribution is the intercept at $\theta=0$, and the finite-window reflection argument below shows that this intercept is zero.

\begin{prop}[Odd local contrast for the sign-symmetrized family]\label{prop:odd-kernel}
In the setting of Proposition~\ref{prop:sign-symmetrized-family}, assume that $c_{3,1}\in L^1(\R^2)$ and let $c_3^{\mathrm o}$ be the odd part defined there. Then
\[
\E_\theta[\mathcal O_{T,g}]=\theta\,\mu_{T,g},
\]
where
\[
\mu_{T,g}
=
\frac{1}{T}\int_0^T\iint_{\R^2}
g(\tau_1,\tau_2)\,
\1_{\{0\le x_0+\tau_1\le T,\ 0\le x_0+\tau_2\le T\}}
\,c_3^{\mathrm o}(\tau_1,\tau_2)\,d\tau_1d\tau_2\,dx_0.
\]
If $\mathrm{supp}(g)\subset[-H,H]^2$, then, as $T\to\infty$,
\[
\mu_{T,g}=\mu_g+O(T^{-1}),
\qquad
\mu_g:=\iint_{\R^2} g(\tau_1,\tau_2)c_3^{\mathrm o}(\tau_1,\tau_2)\,d\tau_1d\tau_2.
\]
If in addition $\widehat g=\ii H_g$ with $H_g$ real-valued and integrable, then
\[
\mu_g
=
\frac{1}{(2\pi)^2}
\iint_{\R^2}H_g(\omega_1,\omega_2)\,\Imm B_1(\omega_1,\omega_2)\,d\omega_1d\omega_2.
\]
\end{prop}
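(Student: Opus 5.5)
The plan is to write the expectation of $\mathcal O_{T,g}$ as a third factorial moment integral, use the expansion \eqref{eq:alpha3-red-expansion} to see that it is affine in $\theta$, and then kill the intercept by a finite-window reflection argument at $\theta=0$.

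\emph{Step 1 (reduction to reduced coordinates).} Put $F(t_0,t_1,t_2):=\1_{[0,T]^3}(t_0,t_1,t_2)\,g(t_1-t_0,t_2-t_0)$, which is bounded and compactly supported. By definition of $\alpha_{3,\theta}$ and the finite third cluster moment, $\E_\theta[T\mathcal O_{T,g}]=\int F\,d\alpha_{3,\theta}$ is finite. Passing to reduced coordinates gives
\[
\E_\theta[\mathcal O_{T,g}]=\frac1T\int_0^T\iint g(\tau)\,\1_{\{0\le x_0+\tau_1\le T,\ 0\le x_0+\tau_2\le T\}}\,\alpha^{\mathrm{red}}_{3,\theta}(d\tau)\,dx_0 .
\]
Strictly, \eqref{eq:alpha3-red-expansion} was stated for $f$ independent of the anchor. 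The same full-coordinate partition identity, tested against $F$ itself rather than $\phi(t_0)f$, yields the analogous expansion with the anchor-dependent window indicator inside the integrals.

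\emph{Step 2 (affine in $\theta$).} In that expansion, the four disconnected terms involve only $\lambda$ and $\kappa_2^{\mathrm{red}}$. By Proposition~\ref{prop:sign-symmetrized-family} these do not depend on $\theta$. The connected term uses $c_{3,\theta}=c_3^{\mathrm e}+\theta c_3^{\mathrm o}$, again from Proposition~\ref{prop:sign-symmetrized-family}. Hence
\[
\E_\theta[\mathcal O_{T,g}]=a_T+\theta\,\mu_{T,g},
\]
where $\mu_{T,g}$ is exactly the displayed integral against $c_3^{\mathrm o}$. The intercept $a_T$ collects the disconnected terms together with the $c_3^{\mathrm e}$ term. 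All integrals are finite because $g$ is bounded, the windows are bounded, $\kappa_2^{\mathrm{red}}$ is finite, and $c_{3,1}\in L^1$.

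\emph{Step 3 (the intercept vanishes).} This is the step I expect to need the most care. Consider the map $x\mapsto T-x$, which preserves $[0,T]$. Since $N_0$ is reversible and stationary, $N_0$ has the same law as its image under this map. Under the map, each ordered triple with lags $(\tau_1,\tau_2)$ becomes one with lags $(-\tau_1,-\tau_2)$. Because $g$ is jointly odd, $\mathcal O_{T,g}(N_0)$ has the same law as $-\mathcal O_{T,g}(N_0)$, so $a_T=\E_0[\mathcal O_{T,g}]=0$. This is exactly where the nonzero disconnected terms for a fixed anchor, noted before the proposition, are absorbed.

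\emph{Step 4 (large $T$).} For $x_0\in[H,T-H]$ and $\tau\in[-H,H]^2$, the window indicator equals $1$. The remaining set of anchors has measure at most $2H$, and there the integrand is bounded by $\|g\|_\infty\|c_3^{\mathrm o}\|_1$. Therefore $|\mu_{T,g}-\mu_g|\le 2H\|g\|_\infty\|c_3^{\mathrm o}\|_1/T$.

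\emph{Step 5 (spectral form).} Since $c_{3,1}$ is real, $\widehat{c_3^{\mathrm o}}(\omega)=\tfrac12(B_1(\omega)-B_1(-\omega))=\tfrac12(B_1-\overline{B_1})(\omega)=\ii\,\Imm B_1(\omega)$. Because $\widehat g=\ii H_g\in L^1$, Fourier inversion gives $g(\tau)=(2\pi)^{-2}\int e^{\ii\omega\cdot\tau}\widehat g(\omega)\,d\omega$. Insert this into $\mu_g$ and apply Fubini, which is valid since $\widehat g\in L^1$ and $c_3^{\mathrm o}\in L^1$. This gives
\[
\mu_g=(2\pi)^{-2}\int \widehat g(\omega)\,\widehat{c_3^{\mathrm o}}(-\omega)\,d\omega .
\]
Finally, $\widehat{c_3^{\mathrm o}}(-\omega)=-\ii\,\Imm B_1(\omega)$, and $\ii\cdot(-\ii)=1$, which yields the stated formula $\mu_g=(2\pi)^{-2}\iint H_g\,\Imm B_1\,d\omega_1d\omega_2$.
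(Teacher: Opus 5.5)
Your proposal is correct and follows the paper's proof essentially step for step. The steps are: the Campbell formula in reduced coordinates; affine dependence on $\theta$ via \eqref{eq:alpha3-red-expansion} and Proposition~\ref{prop:sign-symmetrized-family}; the reflection $x\mapsto T-x$ at $\theta=0$ to kill the intercept; the $2H/T$ window bound; and the Parseval step with $\widehat{c_3^{\mathrm o}}=\ii\,\Imm B_1$. The only difference is cosmetic, in Step 1: the paper avoids re-deriving an anchor-dependent expansion by using Fubini to integrate out $x_0$ first, and then applies \eqref{eq:alpha3-red-expansion} directly to the lag-only test function $f(\tau):=T^{-1}\int_0^T g(\tau)\1_{\{0\le x_0+\tau_1\le T,\,0\le x_0+\tau_2\le T\}}\,dx_0$.
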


Equation~\eqref{eq:alpha3-red-expansion} shows that matched intensity and matched second order remove all $\theta$-dependence from the disconnected terms; only the connected contribution $\kappa_{3,\theta}^{\mathrm{red}}$ varies with the sign parameter. At $\theta=0$, the window reflection on $[0,T]$ is $r_T=\vartheta_T\circ R$, so stationarity and reversibility make the statistic antisymmetric. The appendix gives the exact finite-window calculation.

\begin{prop}[Optimal bounded local odd contrast]\label{prop:optimal-local-contrast}
Assume $c_3^{\mathrm o}\in L^1(\R^2)$ and, for $H>0$, define
\[
D_H
:=
\sup\Big\{
\mu_g:\ g\text{ Borel},\ |g|_\infty\le1,\ \mathrm{supp}(g)\subset[-H,H]^2,\ g(-\tau)=-g(\tau)
\Big\},
\]
where
\[
\mu_g=\iint_{\R^2} g(\tau_1,\tau_2)c_3^{\mathrm o}(\tau_1,\tau_2)\,d\tau_1d\tau_2.
\]
Then
\[
D_H=\int_{[-H,H]^2}|c_3^{\mathrm o}(\tau)|\,d\tau,
\qquad
D_H\uparrow \|c_3^{\mathrm o}\|_1
\quad\text{as }H\to\infty.
\]
In particular, $c_3^{\mathrm o}\not\equiv0$ if and only if there exists a
bounded Borel, compactly supported, jointly odd $g$ with $\mu_g\neq0$. If one restricts instead to continuous or smooth compactly supported odd test functions, the same conclusion follows by standard $L^1$ approximation of $\operatorname{sgn}(c_3^{\mathrm o})$ on compact sets.
\end{prop}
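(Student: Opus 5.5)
The proof is a duality argument in $L^1$–$L^\infty$, restricted to the jointly odd subspace. First note that $c_3^{\mathrm o}$ is jointly odd: $c_3^{\mathrm o}(-\tau)=-c_3^{\mathrm o}(\tau)$ a.e. by its definition in Proposition~\ref{prop:sign-symmetrized-family}. The box $[-H,H]^2$ is centrally symmetric.

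\textbf{Upper bound.} For any admissible $g$, $|g|_\infty\le1$ and $\mathrm{supp}(g)\subset[-H,H]^2$ give immediately
\[
\mu_g\le\int_{[-H,H]^2}|g\,c_3^{\mathrm o}|\,d\tau\le\int_{[-H,H]^2}|c_3^{\mathrm o}|\,d\tau .
\]

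\textbf{Attainment.} The candidate maximizer is
\[
g_H:=\operatorname{sgn}(c_3^{\mathrm o})\,\1_{[-H,H]^2},
\]
built from a fixed Borel representative of $c_3^{\mathrm o}$, with $\operatorname{sgn}(0)=0$. It is Borel, bounded by $1$, and supported in the box. To make it exactly jointly odd pointwise rather than only a.e., I would use the odd representative $\tfrac12(c(\tau)-c(-\tau))$ of $c_3^{\mathrm o}$, where $c$ is any Borel representative of $c_{3,1}$. This representative is odd everywhere, so its sign is odd everywhere, and the symmetric box preserves oddness. Then $\mu_{g_H}=\int_{[-H,H]^2}|c_3^{\mathrm o}|$, so the supremum is attained and equals $D_H$. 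This choice of representative is the only point needing care; the rest is routine.

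\textbf{Monotone limit.} $D_H$ is nondecreasing in $H$, and $D_H\uparrow\|c_3^{\mathrm o}\|_1$ by monotone convergence, using $c_3^{\mathrm o}\in L^1$.

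\textbf{Equivalence.} If $c_3^{\mathrm o}\not\equiv0$, then $\|c_3^{\mathrm o}\|_1>0$, so $D_H>0$ for some $H$, and $g_H$ is a bounded, compactly supported, jointly odd witness with $\mu_{g_H}\ne0$. Conversely, if $c_3^{\mathrm o}=0$ a.e., then $\mu_g=0$ for every $g$.

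\textbf{Continuous or smooth test functions.} Fix $H$. By Lusin's theorem, or by mollification, choose $\psi_k\in C_c^\infty$ with $|\psi_k|\le1$ and $\psi_k\to g_H$ a.e. on a slightly larger box. Replace $\psi_k$ by its odd part $\tfrac12(\psi_k(\tau)-\psi_k(-\tau))$; this keeps the bound $1$ and still converges a.e. to $g_H$, since $g_H$ is odd. Dominated convergence with dominating function $|c_3^{\mathrm o}|$ then gives $\mu_{\psi_k}\to\mu_{g_H}$. Hence the supremum over smooth odd test functions supported in $[-H-\epsilon,H+\epsilon]^2$ is at least $D_H$. Letting $\epsilon\to0$, or truncating the supports to the box, shows the supremum over smooth test functions also equals $D_H$, and the nonvanishing criterion is unchanged.
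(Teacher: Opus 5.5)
Your proposal is correct and follows the paper's proof: the same upper bound from $|g|\le1$, the same maximizer $g_H=\operatorname{sgn}(c_3^{\mathrm o})\1_{[-H,H]^2}$, and monotone convergence. The only difference is that you take an everywhere-odd representative where the paper modifies $g_H$ on a null set, which is equivalent. For the smooth case, note one fix: to keep supports inside $[-H,H]^2$, mollify $g_{H-\epsilon}$ with radius at most $\epsilon$ and use the continuity of $H\mapsto D_H$, because truncating a smooth function by the box indicator destroys smoothness.
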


\subsection{Non-vacuity of the third-order signal}

The next elementary representation will be used in the finite-second-moment monotone case.

\begin{lem}[Monotone densities as scale mixtures of uniforms]\label{lem:uniform-mixture}
Let $h$ be a nonincreasing probability density on $(0,\infty)$, and let $X$ have density $h$. Then there exists a nonnegative random variable $Z$ and an independent $Y\sim\mathrm{Unif}(0,1)$ such that
\[
X\stackrel d=YZ.
\]
Equivalently, $h$ admits the representation
\[
h(x)=\int_{(x,\infty)} z^{-1}\,G(dz)
\quad\text{for a.e. }x>0,
\]
where $G=\Law(Z)$. Moreover, for every $p>0$,
\[
\E X^p=\frac{1}{p+1}\E Z^p
\]
whenever either side is finite. Finally,
\[
h(x)=a^{-1}\1_{(0,a)}(x)\quad\text{a.e.}
\]
if and only if $Z=a$ a.s.
\end{lem}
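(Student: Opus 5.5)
We prove Lemma~\ref{lem:uniform-mixture} by Khintchine's representation, built directly from the monotone density. First fix the right-continuous nonincreasing representative of $h$ on $(0,\infty)$. It is nonnegative and integrable, so $h(x)\to0$ as $x\to\infty$. Define a measure $G$ on $(0,\infty)$ by $G((z,\infty)) := \ldots$. More concretely, take the Lebesgue--Stieltjes measure $\mu:=-dh$ of the nonincreasing function $h$, so that $h(x)=\mu((x,\infty))$, and set $G(dz):=z\,\mu(dz)$.

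Then
\[
\int_{(x,\infty)}z^{-1}G(dz)=\mu((x,\infty))=h(x).
\]
By Tonelli,
\[
G((0,\infty))=\int_0^\infty z\,\mu(dz)=\int_0^\infty\int_0^\infty \1_{\{x<z\}}\,dx\,\mu(dz)=\int_0^\infty h(x)\,dx=1,
\]
so $G$ is a probability law. Let $Z\sim G$ and $Y\sim\mathrm{Unif}(0,1)$ be independent. Given $Z=z$, the variable $YZ$ has density $z^{-1}\1_{(0,z)}$. Integrating over $G$ gives density $\int_{(x,\infty)}z^{-1}G(dz)=h(x)$, hence $X\stackrel d=YZ$. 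The representation and the distributional identity are thus the same statement. Conversely, any such representation yields the identity by the same computation.

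For the moments, independence and Tonelli (all quantities nonnegative) give
\[
\E X^p=\E Y^p\,\E Z^p=\frac{1}{p+1}\E Z^p
\]
in $[0,\infty]$. So one side is finite if and only if the other is, and then they are equal.

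For the final claim, if $Z=a$ a.s. then $h=a^{-1}\1_{(0,a)}$ a.e. directly from the representation. Conversely, suppose $h=a^{-1}\1_{(0,a)}$ a.e. The step I expect to need care is uniqueness of $G$: one cannot simply invoke the construction, since $Z$ is only asserted to exist. I plan to show that $G$ is determined by $h$. From the representation, the right-continuous version of $h$ equals $x\mapsto\int_{(x,\infty)}z^{-1}G(dz)$, which is itself right-continuous and nonincreasing. Hence $\mu=-dh$ is uniquely determined, and so is $G(dz)=z\,\mu(dz)$. For $h=a^{-1}\1_{(0,a)}$, the measure $\mu$ is the point mass $a^{-1}\delta_a$, so $G=\delta_a$, i.e. $Z=a$ a.s.

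An alternative for this converse avoids measure differentiation. Use Mellin transforms: $\E Z^{s}=(s+1)\E X^{s}$ for all $s>0$. Since $X$ is bounded, all moments of $Z$ are finite and equal those of $\delta_a$, namely $(s+1)a^s/(s+1)=a^s$. Compact support of $Z$ then follows from the growth of $\E Z^s$, and moment determinacy gives $Z=a$ a.s.
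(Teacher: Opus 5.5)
Your proof is correct. It differs from the paper's in two places.

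\textbf{Existence.} The paper simply cites Williamson's ($k=1$) representation of nonincreasing densities as mixtures of uniforms. You prove it directly: take the right-continuous version of $h$, set $\mu=-dh$ so that $h(x)=\mu((x,\infty))$, define $G(dz)=z\,\mu(dz)$, and check total mass one by Tonelli. This makes the lemma self-contained and gives the explicit mixing law $G((z,\infty))=zh(z)+\int_z^\infty h(u)\,du$. That formula is what your unfinished display ``$G((z,\infty)):=\ldots$'' should read; otherwise delete the fragment.

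\textbf{Moments.} The conditional-density computation and the moment identity are handled exactly as in the paper.

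\textbf{Converse of the final claim.} The paper argues from support and mean alone. If $YZ\sim\mathrm{Unif}(0,a)$, then $\PP(Z>a+\varepsilon)>0$ would force $\PP(YZ>a)>0$, so $Z\le a$ a.s. Then $\E Z=2\E[YZ]=a$ forces $Z=a$ a.s. Your main argument instead proves uniqueness of the mixing law for every monotone $h$. The function $x\mapsto\int_{(x,\infty)}z^{-1}G(dz)$ is right-continuous and equals $h$ a.e., so it is the right-continuous version of $h$. Hence it determines $z^{-1}G(dz)$ on $(0,\infty)$. Also $G(\{0\})=0$, because $G((0,\infty))=\int z\,\mu(dz)=1$; you should say this explicitly. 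This is more than the lemma needs, but it correctly yields $G=\delta_a$ in the uniform case.

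\textbf{Mellin alternative.} This is also valid but heavier than necessary. From $\E Z^s=(s+1)\E X^s$, the cases $s=1,2$ already give $\E Z=a$ and $\E Z^2=a^2$. Hence $\mathrm{Var}(Z)=0$, with no need for the growth-of-moments support bound or moment determinacy.
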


\begin{proof}
The scale-mixture representation is the $k=1$ case of Williamson's representation of multiply monotone functions, equivalently the classical representation of nonincreasing densities as mixtures of uniform densities; see \cite{Williamson1956} and the continuous/discrete treatment in \cite{Lefevre_and_Loisel2013}. Conditional on $Z$, the density of $YZ$ is $z^{-1}\1_{(0,z)}(x)$, so the displayed integral representation follows. The moment identity is immediate from
\[
\E[(YZ)^p\mid Z]=Z^p\E[Y^p]=\frac{Z^p}{p+1}.
\]
If $Z=a$ a.s., then $YZ$ has density $a^{-1}\1_{(0,a)}$. Conversely, if $YZ$ has this density, then $YZ\le a$ a.s.; if $\PP(Z>a+\varepsilon)>0$ for some $\varepsilon>0$, then $\PP(YZ>a)>0$, a contradiction. Thus $Z\le a$ a.s. Since $\E[YZ]=a/2$, the moment identity gives $\E Z=a$, and $Z\le a$ then forces $Z=a$ a.s.
\end{proof}

The next result shows that the imaginary-bispectrum diagnostic is not merely formal. For broad one-sided Hawkes kernels, the forward law has a genuinely nonzero imaginary bispectrum at some diagonal frequency. The complete branching-cluster formula used in the proof is Theorem~\ref{thm:branching-bispectrum}; explicit cumulant-density and family-tree formulae for Hawkes processes are given in \cite{JovanovicHertzRotter2015}, and recursive computation through the probability generating functional is developed in \cite{Privault2021}.

\begin{prop}[Non-vacuity of the imaginary bispectrum]\label{prop:nonzero-imag-bispectrum}
Let $0<m<1$, let $h$ be a probability density supported on $(0,\infty)$, and consider the stationary one-sided linear Hawkes, equivalently Poisson branching-cluster, process with immigrant rate $\nu$ and offspring density $mh$. Let $B$ denote its reduced third factorial-cumulant bispectrum. Suppose either
\begin{enumerate}
\item
\[
\int_x^\infty h(u)\,du\sim x^{-\alpha}L(x),\qquad x\to\infty,
\]
for some $0<\alpha\le2$ and slowly varying $L$; or
\item $h$ is nonincreasing on $(0,\infty)$, $\int_0^\infty u^2h(u)\,du<\infty$, and $h$ is not of the form $a^{-1}\1_{(0,a)}$ for some $a>0$.
\end{enumerate}
Then there exists $t_0>0$ such that
\[
\Imm B(t_0,t_0)\neq0.
\]
Consequently, this one-sided process is separated at third order from every reversible process whose reduced third factorial cumulant has an $L^1$ density.
\end{prop}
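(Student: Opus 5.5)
By Lemma~\ref{lem:ordinary-factorial-bispectrum}, $\Imm B=\Imm B_{\mathrm{comp}}$. Theorem~\ref{thm:branching-bispectrum} then reduces the claim to showing that $\Imm Q(t,t)\neq0$ for some $t>0$, because the denominator there is strictly positive. On the diagonal, write $a(t):=\widehat h(-t)=\overline{\widehat h(t)}$ and $b(t):=\widehat h(2t)$. Then
\[
Q(t,t)=a^2+2ab(1-ma).
\]
The plan is a small-$t$ asymptotic expansion of $\Imm Q(t,t)$. I will show that its leading nonvanishing term is nonzero, so that $\Imm Q(t_0,t_0)\neq0$ for all sufficiently small $t_0>0$. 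The final sentence of the proposition then follows from Theorem~\ref{thm:real-bispectrum}: any reversible comparison with an $L^1$ third factorial cumulant density has a real bispectrum, and Proposition~\ref{prop:L1-branching} puts the forward process itself in the $L^1$ regime.

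\emph{Case 2 (finite second moment).} Write $\widehat h(t)=1-\ii\mu t-\tfrac{s}{2}t^2+\ii\,\eta(t)$, where $\mu=\int uh$ and $s=\int u^2h$. The remainder is
\[
\eta(t)=\mu t-\int\sin(tu)h(u)\,du\;\ge0,
\]
and $\eta(t)=o(t^2)$; the real remainder is also $o(t^2)$. Substituting these expansions into $a^2+2ab(1-ma)$ and collecting imaginary parts, the $O(t)$ terms cancel exactly: the contributions are $2\mu t-2(1-m)\mu t-2m\mu t=0$. Products of a real $t^2$ term with an imaginary $t$ term, together with the $\eta$ terms, then give
\[
\Imm Q(t,t)=\mu^3P_1(m)\,t^3+\mu s\,P_2(m)\,t^3+\text{(explicit linear combination of }\eta(t),\eta(2t))+o(t^3),
\]
where $P_1,P_2$ are explicit polynomials in $m$. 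Two subcases arise.

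If $\int u^3h=\infty$, then $\eta(t)/t^3\to\infty$. Since $\eta(t)/t^3$ is nondecreasing in some averaged sense, the $\eta$ combination dominates, and I will check that its coefficient is nonzero for $0<m<1$.

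If $\int u^3h<\infty$, then $\eta(t)=kt^3/6+o(t^3)$ with $k=\int u^3h$, and everything reduces to a cubic coefficient $c(\mu,s,k,m)$. I will insert the representation of Lemma~\ref{lem:uniform-mixture}, namely $\mu=\E Z/2$, $s=\E Z^2/3$, $k=\E Z^3/4$. The expectation is that $c$ becomes a positive multiple of a nonnegative moment combination such as $\E Z\,\E Z^2-(\E Z)^3$, or $\E Z^3\,(\E Z)-(\E Z^2)^2$. Such a combination vanishes, by Jensen or Cauchy--Schwarz, only when $Z$ is a.s. constant, i.e.\ only for the excluded uniform kernel $h=a^{-1}\1_{(0,a)}$. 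As a sanity check, the uniform kernel itself should give $c=0$.

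\emph{Case 1 (regularly varying tail, $0<\alpha\le2$).} Here I will use Tauberian/Abelian results for Fourier transforms of regularly varying distributions. The expected leading behaviour is
\[
1-\widehat h(t)\sim \Gamma(1-\alpha)\,(\ii t)^{\alpha}L(1/t)\quad\text{for }0<\alpha<1,
\]
with the analogous form after subtracting $\ii\mu t$ for $1<\alpha<2$, and with logarithmic corrections at $\alpha=1$ and $\alpha=2$. The deviation from the linear behaviour carries a phase $e^{\ii\pi\alpha/2}$ that is not real. After the exact $O(t)$ cancellation above, the leading term of $\Imm Q(t,t)$ is then of order $t^{\alpha}L(1/t)$ when $\alpha<1$, or $t^{1+\alpha}$-type when $1\le\alpha\le2$ (the latter again combined with $\mu$). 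Its coefficient is a nonzero trigonometric factor times a nonzero polynomial in $m$; the factor combines $2^{\alpha}$ from $b(t)=\widehat h(2t)$ with $1$ from $a$.

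The main obstacle is Case 1, and especially the borderline exponents $\alpha=1$ and $\alpha=2$. At these values the naive phase factor degenerates ($\sin(\pi\alpha/2)$ or $\cos(\pi\alpha/2)$ vanishes) and slowly varying corrections such as $t\,\ell(1/t)$ or $t^2\log(1/t)$ become the leading terms. For these I will first try to work directly with the integrated tail $\bar H(x)=\int_x^\infty h$ via
\[
1-\widehat h(t)=\ii t\int_0^\infty e^{-\ii tx}\bar H(x)\,dx,
\]
and apply Karamata's theorem to the real and imaginary parts separately. The aim is to show that one of them is of strictly larger order than the $\mu s$-type polynomial terms, and that this part enters $\Imm Q$ with coefficient proportional to $2^{\beta}-2$ for some $\beta\neq1$, which is nonzero.
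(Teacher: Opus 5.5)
You follow the paper's route: reduce to $A(t):=\Imm Q(t,t)$ via Lemma~\ref{lem:ordinary-factorial-bispectrum} and Theorem~\ref{thm:branching-bispectrum}, then do small-$t$ asymptotics. Your $\eta$-bookkeeping is correct. Its consequence should be made explicit: since $\partial_aQ=4(1-m)$ and $\partial_bQ=2(1-m)$ at $a=b=1$, the whole first-order imaginary contribution is $2(1-m)\{\eta(2t)-2\eta(t)\}=2(1-m)\{2V(t)-V(2t)\}$. Here $U(t):=\int_0^\infty\cos(tu)h(u)\,du$ and $V(t):=\int_0^\infty\sin(tu)h(u)\,du$. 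This is exactly the leading term of the paper's formula for $A(t)$.

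\textbf{First gap: case (ii) with $\int u^3h=\infty$.} The term above is a \emph{difference} of two quantities each $\gg t^3$. So $\eta(t)/t^3\to\infty$ does not show that it dominates. Monotonicity of $t\mapsto\eta(t)/t^3$, which would have to be nonincreasing since it blows up at $0$, only gives the upper bound $\eta(2t)\le 8\eta(t)$. The nonvanishing of the scalar $2(1-m)$ is not the issue. Since $2V(t)-V(2t)=\int_0^\infty 2\sin(tu)(1-\cos tu)\,h(u)\,du$ has a sign-changing integrand, monotonicity of $h$ must be used here too. The missing step is the uniform mixture of Lemma~\ref{lem:uniform-mixture}. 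For $h=z^{-1}\mathbf 1_{(0,z)}$ one finds $2V(t)-V(2t)=(1-\cos tz)^2/(tz)$, hence $2V(t)-V(2t)=\E[(1-\cos tZ)^2/(tZ)]\ge0$. Fatou with $(1-\cos x)^2/x^4\to1/4$ then gives $t^{-3}\{2V(t)-V(2t)\}\to\infty$, while your remaining terms are $O(t^3)$. In the finite-third-moment subcase, the cubic coefficient of $A(t)$ is $\tfrac12\{(1-m)(\E Z^3-\E Z\,\E Z^2)+m\,\E Z\,\mathrm{Var}(Z)\}$, not a multiple of one of your guessed combinations. You therefore also need $\E Z^3\ge \E Z\,\E Z^2$. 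This does follow by chaining your two inequalities: $\E Z\,\E Z^3\ge(\E Z^2)^2\ge \E Z^2(\E Z)^2$.

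\textbf{Second gap: case (i).} Your predicted orders for $1\le\alpha\le2$ are wrong. The first-order term $2(1-m)\{2V(t)-V(2t)\}$ removes only the exactly linear part of $V$. So the leading order is $t^\alpha L(1/t)$ for every $0<\alpha\le2$, not $t^{1+\alpha}$, and products such as $\mu t\{1-U(t)\}$ are subleading. For $\alpha\ne1$ your $2^\beta-2$ mechanism with $\beta=\alpha$ does work and reproduces the paper's limit $A(t)/\{t^\alpha L(1/t)\}\to 2(1-m)(2-2^\alpha)C(\alpha)$. At $\alpha=2$ the logarithmic degeneracy sits in $1-U$, which enters only through products that are $o(t^2L(1/t))$.

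At $\alpha=1$, however, the mechanism fails. Write $\overline H(x)=\int_x^\infty h(u)\,du$ and $J(x)=\int_1^x u^{-1}L(u)\,du$. The first-order Karamata asymptotics of $V$ (of order $tJ(1/t)$, or $\mu t$) cancel in $2V(t)-V(2t)$, and no $\beta\neq1$ is available. What survives is the slowly varying increment $J(1/t)-J(1/(2t))\sim(\log 2)L(1/t)$. Capturing it requires a second-order expansion with an exact constant. The paper uses Pitman's $V(t)=t\int_0^{1/t}\overline H(x)\,dx-\gamma\overline H(1/t)+o(\overline H(1/t))$, after which the Euler-constant terms cancel. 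It also needs Potter bounds to show the product terms are $o(tL(1/t))$, even though $V$ itself may be of the larger order $tJ(1/t)$.

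A cleaner repair, in the spirit of your $\overline H$ representation, is to apply the Abelian argument to the combination itself. Integrating by parts gives $2V(t)-V(2t)=2\int_0^\infty(\cos y-\cos 2y)\,\overline H(y/t)\,dy$. Dividing by $\overline H(1/t)$ gives the limit $2\int_0^\infty(\cos y-\cos 2y)\,y^{-\alpha}\,dy$ for every $0<\alpha\le2$. This is a positive constant, equal to $2\log 2$ at $\alpha=1$ by Frullani's integral.
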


The proof is in the appendix. The one-sided uniform density is the exceptional one-sided cluster case in which the imaginary bispectrum vanishes identically despite the one-sided construction; the appendix records the resulting phase-cancellation algebra, shows that condition~(ii) is sharp within the monotone finite-second-moment class, and collects the quantitative small-frequency lower bounds that follow from the proof.

\begin{rem}[A crude branching-ratio envelope]\label{rem:branching-ratio-envelope}
Let $B_{\mathrm{fac}}$ be the reduced third factorial-cumulant
bispectrum of a forward Poisson branching-cluster process, and let
\(M:=C(\R)\) be the total size of one cluster. If the reduced third
factorial cumulant has an \(L^1\) density, then
\[
\sup_{(\omega_1,\omega_2)\in\R^2}
|\Imm B_{\mathrm{fac}}(\omega_1,\omega_2)|
\le
\nu\,\E[(M)_3].
\]
For Poisson offspring with mean $m$,
\[
\E[(M)_3]
=
\frac{m^2(2m^2-8m+9)}{(1-m)^5}.
\]
Thus, at fixed total intensity $\lambda=\nu/(1-m)$, the upper envelope is
\[
\lambda\,\frac{m^2(2m^2-8m+9)}{(1-m)^4}
=O(m^2),
\qquad m\downarrow0.
\]
This is only a crude global envelope; the actual signal at a given frequency also depends on the kernel shape and time scale. Under the scale family $h_\beta(t)=\beta h(\beta t)$, the corresponding third-cumulant density scales as
\[
c_3^{(\beta)}(\tau_1,\tau_2)=\beta^2c_3^{(1)}(\beta\tau_1,\beta\tau_2),
\]
and hence
\[
B^{(\beta)}(\omega_1,\omega_2)=B^{(1)}(\omega_1/\beta,\omega_2/\beta).
\]
The proof is included in the appendix.
\end{rem}

\section{Discussion}

The results show that a good second-order Hawkes fit should not automatically be read as evidence for a Hawkes mechanism. Once the comparison class is allowed to include the matched reversible cluster laws constructed here, intensity, pair correlation, covariance, full Bartlett spectra, and related second-order diagnostics can support clustering without supporting directed temporal self-excitation.

The relevant evidence for orientation must come from elsewhere. For prediction, a Hawkes model may still be natural because its conditional intensity is adapted to the observed history, and predictive performance can be assessed through likelihood, residual calibration, or short-horizon count prediction. For a causal or mechanistic interpretation, however, the data must contain time-asymmetric information. In the framework studied here, a robust certificate for that information appears in third order: in the odd part of the reduced third factorial cumulant, or equivalently in the imaginary part of the bispectrum.

The branching-cluster formula clarifies the null side. The Fourier--Stieltjes formula for the reduced complete third cumulant measure does not depend on one-sidedness; it applies equally to branching clusters whose displacement law is not supported on a half-line. If the offspring displacement density is centrally symmetric about the origin, then the cluster law is invariant under reflection and the null is reversible, so the factorial Fourier--Stieltjes transform is real, and any factorial bispectral density is real when it exists. Thus the distinction is not between ``Hawkes formula'' and ``spatial formula'', but between oriented and unoriented cluster laws.

The monotone-kernel construction shows that the second-order ambiguity is broad rather than exceptional. Every nonincreasing one-sided branching kernel considered in Theorem~\ref{thm:monotone-match} has an explicit reversible branching-cluster spectral match, including exponential and monotone heavy-tailed kernels. Thus familiar second-order covariance shapes should not by themselves be taken as evidence for causal self-excitation. Proposition~\ref{prop:nonzero-imag-bispectrum} supplies the complementary positive statement: for regularly varying one-sided kernels and for nonuniform monotone finite-second-moment kernels, the imaginary bispectrum is nonzero somewhere. In these cases the third-order orientation signal is not vacuous.

Several limitations remain. Reversibility implies a real bispectrum in the $L^1$ third-cumulant setting, but a real bispectrum alone does not prove reversibility. Remark~\ref{rem:uniform-kernel-exception} shows the precise limitation: for the one-sided uniform kernel, the third-order phase cancellation makes the imaginary bispectrum vanish identically even though the branching construction is one-sided. That calculation proves a phase cancellation in the third-order frequency-domain cumulant; it does not by itself establish non-reversibility of the corresponding unlabeled stationary point process. Thus a nonzero imaginary bispectrum should be read as a sufficient certificate against reversible $L^1$ nulls, not as a necessary feature of every one-sided construction. The observable statistics above give natural third-order orientation contrasts, but a full asymptotic testing theory, especially under heavy-tailed lifetimes and long memory, remains open. The explicit orientation-contrast construction and monotone spectral match developed here are univariate; in multivariate settings, directionality may enter through marks, components, or cross-excitation structure. The main message is therefore not that Hawkes models should be avoided, but that evidence for directed self-excitation must come from predictive performance, modelling context, external information, or higher-order time-asymmetric structure.


\acks During the preparation of this work the authors used ChatGPT 5.5 Pro, an AI language model, for language editing and mathematical-check assistance. The authors take full responsibility for all content.

\fund This work was supported by the Royal Society of New Zealand Marsden Fund under grants MFP-UOO2323 and MFP-UOO2518.

\competing There were no competing interests to declare which arose during the preparation or publication process of this article.


\appendix
\section{Proofs and technical remarks}

\subsection{Proof of Lemma~\ref{lem:pcp-cumulant-functional}.}
On any domain where the displayed logarithms are finite, the two generating-functional identities follow from the exponential formula for a Poisson process of immigrants. Indeed, each immigrant at $u$ contributes an independent cluster functional, and the logarithm of the Poisson product functional is the immigrant intensity integrated against the mean cluster contribution minus one.

For the finite-order identities under the weaker assumption $\E[C(\R)^n]<\infty$, apply the preceding formula first to bounded-size truncations of the cluster and then let the truncation level tend to infinity. For fixed compactly supported directions $f_0,\ldots,f_{n-1}$, the relevant $n$th derivatives are bounded by a constant times $C(\R)^n$; after integration over the immigrant location $u$, the compact-overlap indicators contribute only a finite deterministic factor. Dominated convergence therefore justifies the passage to the limit. Differentiating $K_{\mathrm{fac}}^N$ in the directions $f_0,\ldots,f_{n-1}$ gives the factorial cumulants and selects ordered tuples of pairwise distinct cluster points. Differentiating $K_{\mathrm{comp}}^N$ gives the complete cumulants and allows repeated cluster points:
\[
\left.\partial_{s_0}\cdots\partial_{s_{n-1}}
\exp\left\{\sum_{j=0}^{n-1}s_jC(f_j(u+\cdot))\right\}\right|_{s=0}
=
\prod_{j=0}^{n-1}C(f_j(u+\cdot)).
\]
Rewriting these full-coordinate formulae in stationary reduced coordinates
\[
(\tau_1,\ldots,\tau_{n-1})=(x_1-x_0,\ldots,x_{n-1}-x_0)
\]
yields the two reduced-measure identities. The Fourier--Stieltjes identity for $\gamma_3^{\mathrm{red}}$ follows by transforming the reduced complete-cumulant formula with $n=3$.

\subsection{Proof of Theorem~\ref{thm:real-bispectrum}.}
Factorial cumulants are determined by the law of the point process. Hence $N\stackrel d=RN$ implies that the full third factorial cumulant measure $\kappa_3$ is invariant under
\[
(t_0,t_1,t_2)\mapsto(-t_0,-t_1,-t_2).
\]
To pass this statement to reduced coordinates, let $\psi$ be a bounded compactly supported test function on $\R^2$ and let $\phi$ be an even compactly supported test function on $\R$ with $\int\phi=1$. Testing the full measure against
\[
F(t_0,t_1,t_2)=\phi(t_0)\psi(t_1-t_0,t_2-t_0)
\]
and using the definition of the reduced measure gives $\int\psi\,d\kappa_3^{\mathrm{red}}$. Reflection invariance gives the same integral with $\psi(\tau_1,\tau_2)$ replaced by $\psi(-\tau_1,-\tau_2)$. Hence
\[
\kappa_3^{\mathrm{red}}(A)=\kappa_3^{\mathrm{red}}(-A),
\qquad A\subset\R^2\ \text{Borel}.
\]
If $\kappa_3^{\mathrm{red}}(d\tau)=c_3(\tau)\,d\tau$, then
\[
c_3(\tau)=c_3(-\tau)\quad\text{a.e.}
\]
Consequently
\[
B_{\mathrm{fac}}(\omega)
=
\int e^{-\ii\omega\cdot\tau}c_3(\tau)\,d\tau
=
\int e^{\ii\omega\cdot\tau}c_3(\tau)\,d\tau
=
\overline{B_{\mathrm{fac}}(\omega)},
\]
so $B_{\mathrm{fac}}$ is real-valued.

\subsection{Proof of Lemma~\ref{lem:ordinary-factorial-bispectrum}.}
Use \eqref{eq:ordinary-factorial-generating-functional} first in a setting where the generating functionals are finite, or equivalently apply the moment--cumulant partition formula and then pass from bounded truncations under the stated finite-measure assumptions. Differentiating the identity
\[
K_{\mathrm{comp}}(f)=K_{\mathrm{fac}}(e^f-1)
\]
at zero in the three directions $f_0,f_1,f_2$ gives
\begin{align*}
\gamma_3(f_0,f_1,f_2)
&=
\kappa_3(f_0,f_1,f_2)
+\kappa_2(f_0f_1,f_2)
+\kappa_2(f_0f_2,f_1) \\
&\quad+\kappa_2(f_1f_2,f_0)
+\lambda\int f_0f_1f_2 .
\end{align*}
Thus the complete third cumulant equals the third factorial cumulant plus the three pair-diagonal second-factorial-cumulant terms and the full triple diagonal. Passing to reduced Fourier coordinates with frequencies $(\omega_1,\omega_2,-\omega_1-\omega_2)$, the two pair diagonals involving the anchor coordinate contribute
\[
\widehat\kappa_2^{\mathrm{red}}(\omega_1),\qquad
\widehat\kappa_2^{\mathrm{red}}(\omega_2).
\]
For the pair diagonal $t_1=t_2$, an ordered-coordinate convention may first produce $\widehat\kappa_2^{\mathrm{red}}(-\omega_1-\omega_2)$; this equals $\widehat\kappa_2^{\mathrm{red}}(\omega_1+\omega_2)$ because the second reduced factorial cumulant is symmetric, $\kappa_2^{\mathrm{red}}(A)=\kappa_2^{\mathrm{red}}(-A)$. Hence the three pair diagonals contribute
\[
\widehat\kappa_2^{\mathrm{red}}(\omega_1),\qquad
\widehat\kappa_2^{\mathrm{red}}(\omega_2),\qquad
\widehat\kappa_2^{\mathrm{red}}(\omega_1+\omega_2),
\]
and the triple diagonal contributes $\lambda$. Since
\[
\Gamma(\omega)=\lambda+\widehat\kappa_2^{\mathrm{red}}(\omega),
\]
we obtain
\[
B_{\mathrm{comp}}
=
B_{\mathrm{fac}}
+\{\Gamma(\omega_1)-\lambda\}
+\{\Gamma(\omega_2)-\lambda\}
+\{\Gamma(\omega_1+\omega_2)-\lambda\}
+\lambda,
\]
which is \eqref{eq:complete-factorial-bispectrum-relation}. The same symmetry of $\kappa_2^{\mathrm{red}}$ makes the correction terms real-valued, so $\Imm B_{\mathrm{comp}}=\Imm B_{\mathrm{fac}}$.

\subsection{Proof of Theorem~\ref{thm:branching-bispectrum}.}
Let $C$ be the rooted branching cluster and write
\[
W(\omega):=\widehat C(\omega)=\sum_{x\in C}e^{-\ii\omega x}.
\]
By Lemma~\ref{lem:pcp-cumulant-functional}, in reduced Fourier coordinates with $\omega_1+\omega_2+\omega_3=0$,
\begin{equation}\label{eq:poisson-cluster-complete-cumulant-moment}
B_{\mathrm{comp}}(\omega_1,\omega_2)=\nu\,\E\{W(\omega_1)W(\omega_2)W(\omega_3)\}.
\end{equation}

The cluster recursion is
\[
W(\omega)=1+\sum_j e^{-\ii\omega D_j}W_j(\omega),
\]
where $\{D_j\}$ is a Poisson process on $\R$ with intensity $mh(x)\,dx$, and, conditionally on the first generation, the $W_j$'s are independent copies of $W$. Let
\[
M_1(a):=\E W(a),\qquad M_2(a,b):=\E\{W(a)W(b)\},\qquad M_3(a,b,c):=\E\{W(a)W(b)W(c)\}.
\]
The first moment satisfies
\[
M_1(a)=1+\Phi(a)M_1(a),
\qquad\text{hence}\qquad M_1(a)=R(a).
\]
For the second moment, the two frequency factors either descend through different first-generation children or through the same child. The Poisson moment formula gives
\[
M_2(a,b)=R(a)R(b)+\Phi(a+b)M_2(a,b),
\]
so
\begin{equation}\label{eq:branching-cluster-M2}
M_2(a,b)=R(a)R(b)R(a+b).
\end{equation}
For the third moment, the three factors may pass through three different children, through one common pair and one separate child, or through one common child. Hence
\begin{align*}
M_3(a,b,c)
&=R(a)R(b)R(c)
+\Phi(a+b)M_2(a,b)R(c)
+\Phi(a+c)M_2(a,c)R(b) \\
&\quad+\Phi(b+c)M_2(b,c)R(a)
+\Phi(a+b+c)M_3(a,b,c).
\end{align*}
When $a+b+c=0$, $\Phi(a+b+c)=m$. Using \eqref{eq:branching-cluster-M2} and multiplying by $\nu$ gives
\begin{align*}
\nu M_3(a,b,c)
&=\frac{\nu}{1-m}R(a)R(b)R(c)
\{1+\Phi(a+b)R(a+b)+\Phi(a+c)R(a+c)+\Phi(b+c)R(b+c)\} \\
&=\lambda R(a)R(b)R(c)\{R(a+b)+R(a+c)+R(b+c)-2\}.
\end{align*}
Taking $(a,b,c)=(\omega_1,\omega_2,-\omega_1-\omega_2)$ proves \eqref{eq:branching-complete-bispectrum-R-form}.

It remains only to rewrite \eqref{eq:branching-complete-bispectrum-R-form}. Put $r(\omega):=1-m\widehat h(\omega)$. Since $h$ is real-valued, $r(-\omega)=\overline{r(\omega)}$. With $a=\omega_1$, $b=\omega_2$, and $c=-a-b$, formula \eqref{eq:branching-complete-bispectrum-R-form} equals
\[
\lambda\frac{1}{r(a)r(b)r(c)}
\left\{\frac{1}{r(-a)}+\frac{1}{r(-b)}+\frac{1}{r(a+b)}-2\right\}.
\]
Multiplying numerator and denominator by $r(-a)r(-b)r(a+b)$ gives the denominator in \eqref{eq:branching-complete-bispectrum-Q-form}. The numerator is
\[
r(-b)r(a+b)+r(-a)r(a+b)+r(-a)r(-b)-2r(-a)r(-b)r(a+b).
\]
Using $r(\omega)=1-m\widehat h(\omega)$, this equals
\[
1-m^2\Big[\widehat h(-a)\widehat h(-b)
+\widehat h(a+b)\{\widehat h(-a)+\widehat h(-b)-2m\widehat h(-a)\widehat h(-b)\}\Big],
\]
which is $1-m^2Q(a,b)$. This proves \eqref{eq:branching-complete-bispectrum-Q-form}. The displayed imaginary-part identity follows because the denominator is real and bounded below by $(1-m)^6$.

\subsection{Proof of Corollary~\ref{cor:symmetric-branching-real}.}
If $h(x)=h(-x)$, then reflecting every edge displacement $D\mapsto -D$ preserves the displacement law and leaves the Galton--Watson genealogy unchanged. Hence the rooted cluster law satisfies $C\stackrel d=-C$. The subcritical Poisson Galton--Watson cluster has finite third moment, so the complete and factorial reduced third cumulant measures are finite. The stationary immigrant Poisson process is also invariant under reflection, so the full branching-cluster process is reversible. Theorem~\ref{thm:real-bispectrum} gives real $B_{\mathrm{fac}}$ in the $L^1$ factorial-density case, and the finite-measure version gives the same Fourier--Stieltjes conclusion without writing a density. Lemma~\ref{lem:ordinary-factorial-bispectrum} then gives real $B_{\mathrm{comp}}$, since the added diagonal Fourier--Stieltjes terms are real. Equivalently, $\widehat h(\omega)$ and $R(\omega)$ are real and even, so real-valuedness of the complete transform is also immediate from \eqref{eq:branching-complete-bispectrum-R-form}.

\subsection{Proof of Proposition~\ref{prop:L1-branching}.}
Label the vertices of the genealogy by finite words in a countable alphabet, with the root denoted by $\varnothing$. The random genealogy is a random finite rooted labelled tree $\mathcal T$. Conditional on $\mathcal T$, write $T_v$ for the position of vertex $v$.

By Lemma~\ref{lem:pcp-cumulant-functional},
\[
\kappa_3^{\mathrm{red}}
=
\nu\,\E
\sum_{v_0,v_1,v_2\in\mathcal T}^{\neq}
\delta_{T_{v_1}-T_{v_0}}
\otimes
\delta_{T_{v_2}-T_{v_0}}.
\]
Fix a finite labelled genealogy and an ordered triple $\mathbf v=(v_0,v_1,v_2)$ of distinct vertices. The vector
\[
X_{\mathbf v}:=(T_{v_1}-T_{v_0},\,T_{v_2}-T_{v_0})
\]
is a linear image of the edge-displacement vector. The two signed incidence vectors corresponding to the paths from $v_0$ to $v_1$ and from $v_0$ to $v_2$ are linearly independent: in a tree, two such path vectors cannot be scalar multiples unless the two paths have the same endpoint. Thus this linear map has rank two.

Conditional on $\mathcal T$, the edge-displacement vector has a Lebesgue density. A rank-two linear image of a vector with a Lebesgue density has a Lebesgue density on $\R^2$. Therefore each $X_{\mathbf v}$ has a conditional density $q_{\mathbf v}^{\mathcal T}$. Since the set of possible finite labelled genealogies is countable, these densities may be chosen consistently on the countable state space, and
\[
c_3(\tau_1,\tau_2)
=
\nu\,\E\left[
\sum_{v_0,v_1,v_2\in\mathcal T}^{\neq}
q_{\mathbf v}^{\mathcal T}(\tau_1,\tau_2)
\right]
\]
is a density of $\kappa_3^{\mathrm{red}}$. Tonelli's theorem gives
\[
\int_{\R^2}c_3(\tau_1,\tau_2)\,d\tau_1d\tau_2
=
\nu\,\E\sum_{v_0,v_1,v_2\in\mathcal T}^{\neq}1
=
\nu\,\E[(C(\R))_3]<\infty.
\]
Thus $c_3\in L^1(\R^2)$.

\subsection{Proof of Proposition~\ref{prop:sign-symmetrized-family}.}
The intensity is $\lambda=\nu\,\E C(\R)$, which is unaffected by reflection of a cluster. By Lemma~\ref{lem:pcp-cumulant-functional} with $n=2$,
\[
\kappa_{2,\theta}^{\mathrm{red}}(d\tau)
=
\nu\,\E\sum_{x,y\in C_{1,\theta}}^{\neq}\delta_{y-x}(d\tau).
\]
For a fixed cluster $C$, the ordered pair-difference measure
\[
\sum_{x,y\in C}^{\neq}\delta_{y-x}
\]
is invariant under reflection: the map $C\mapsto -C$ sends the contribution $y-x$ to $x-y$, and the ordered pair $(y,x)$ is also present in the same sum. Hence $\kappa_{2,\theta}^{\mathrm{red}}$ is independent of $\theta$. Moreover,
\[
\Gamma(\omega)
=
\lambda+
\nu\,\E\sum_{x,y\in C}^{\neq}e^{-\ii\omega(y-x)}
=
\nu\,\E|\widehat C(\omega)|^2 .
\]

The identity $N_{-1}\stackrel d=RN_1$ follows because reflection sends
\[
\sum_j\vartheta_{U_j}C_j
\]
to
\[
\sum_j\vartheta_{-U_j}(-C_j),
\]
and the reflected immigrant process $\{-U_j\}$ has the same Poisson law as $\{U_j\}$. At $\theta=0$, the cluster law is reflection invariant, so $N_0$ is reversible.

For the third-order identity, Lemma~\ref{lem:pcp-cumulant-functional} with $n=3$ shows that reduced third factorial cumulants depend linearly on the cluster law. Since
\[
\Law(C_{1,\theta})=\frac{1+\theta}{2}\Law(C)+\frac{1-\theta}{2}\Law(-C),
\]
we get
\[
\kappa_{3,\theta}^{\mathrm{red}}
=
\frac{1+\theta}{2}\kappa_{3,1}^{\mathrm{red}}
+
\frac{1-\theta}{2}\kappa_{3,-1}^{\mathrm{red}}.
\]
Reflection gives
\[
c_{3,-1}(\tau)=c_{3,1}(-\tau)\quad\text{a.e.},
\]
and therefore
\[
c_{3,\theta}=c_3^{\mathrm e}+\theta c_3^{\mathrm o}.
\]
Taking Fourier transforms yields
\[
B_\theta=\Ree B_1+\ii\theta\,\Imm B_1,
\]
because the Fourier transform of $c_{3,1}(-\cdot)$ is
\[
B_1(-\omega_1,-\omega_2)=\overline{B_1(\omega_1,\omega_2)}.
\]

\subsection{Details for Theorem~\ref{thm:monotone-match}.}
We first verify that $\rho_h$ admits an even probability-density representative. Choose the nonincreasing representative of $h$ on $(0,\infty)$. Since $h$ is supported on $[0,\infty)$ and nonincreasing on $(0,\infty)$, for a.e. $x\in\R$,
\[
(h*\check h)(|x|)
=
\int_{0}^{\infty} h(|x|+u)h(u)\,du
\le h(|x|)\int_{0}^{\infty}h(u)\,du
=
h(|x|).
\]
Thus $\rho_h\ge0$ a.e. Also
\[
\int_{\R} \rho_h(x)\,dx
=
\frac{1}{2-m}\left(\int_{\R}h(|x|)\,dx-m\int_{\R}(h*\check h)(x)\,dx\right)
=
\frac{2-m}{2-m}=1.
\]
Finally, $(h*\check h)(x)=(h*\check h)(-x)$ for every $x\in\R$, so $\rho_h(x)=\rho_h(-x)$ for a.e. $x$. Hence $\rho_h$ admits an even probability-density representative.

Next,
\[
\widehat{h(|\cdot|)}(\omega)=2\Ree \widehat h(\omega),\qquad
\widehat{h*\check h}(\omega)=|\widehat h(\omega)|^2,
\]
and therefore
\[
\widehat{\rho_h}(\omega)
=
\frac{2\Ree\widehat h(\omega)-m|\widehat h(\omega)|^2}{2-m}.
\]
This gives
\[
1-m(2-m)\widehat{\rho_h}(\omega)=|1-m\widehat h(\omega)|^2.
\]
The coefficients $p_n$ are positive, and the Maclaurin series of $1-\sqrt{1-z}$ yields
\[
\sum_{n\ge1}p_n z^n=\frac{1-\sqrt{1-m(2-m)z}}{m},\qquad |z|\le1,
\]
where the square root is the principal branch on the disc and the positive real square root for real $z\in[0,1]$. In particular,
\[
\sum_{n\ge1}p_n=\frac{1-\sqrt{1-m(2-m)}}{m}=1.
\]
Because $K$ is independent of $(Y_k)_{k\ge1}$, the density of $Y=\sum_{k=1}^K Y_k$ is
\[
\varphi_h=\sum_{n\ge1}p_n\,\rho_h^{*n},
\]
so $\varphi_h$ is even. Since $|\widehat{\rho_h}(\omega)|\le1$,
\[
\widehat{\varphi_h}(\omega)=\sum_{n\ge1}p_n\widehat{\rho_h}(\omega)^n
=\frac{1-\sqrt{1-m(2-m)\widehat{\rho_h}(\omega)}}{m}.
\]
Also $\widehat{\rho_h}$ is real-valued and
\[
1-m(2-m)\widehat{\rho_h}(\omega)=|1-m\widehat h(\omega)|^2>0,
\]
so the square root in \eqref{eq:varphi-transform} is the positive real square root. Finally,
\[
|1-m\widehat{\varphi_h}(\omega)|^2
=
1-m(2-m)\widehat{\rho_h}(\omega)
=
|1-m\widehat h(\omega)|^2.
\]
Because $\varphi_h$ is even, the offspring displacements in the associated branching construction have reflection-symmetric law. Reflecting every edge displacement therefore preserves the cluster law, so the resulting stationary branching-cluster process with offspring density $m\varphi_h$ is reversible, and hence serves as an undirected comparison null in our sense. The common branching ratio $m$ gives the same total intensity $\lambda=\nu/(1-m)$, and the spectral identity above gives the same full Bartlett spectrum as the one-sided model with offspring density $mh$. Since the complete reduced covariance measures are finite in this subcritical branching setting (the cluster size has finite second moment), uniqueness of Fourier--Stieltjes transforms gives equality of the complete reduced covariance measures as well. Corollary~\ref{cor:symmetric-branching-real} gives real complete and factorial third-order Fourier--Stieltjes transforms for this reversible match. When an $L^1(\R^2)$ reduced third factorial cumulant density is present, the corresponding factorial bispectral density is real; the complete third cumulant has the usual additional diagonal singular components, whose Fourier--Stieltjes contributions are real by Lemma~\ref{lem:ordinary-factorial-bispectrum}.

For the exponential kernel $h(t)=\beta e^{-\beta t}\1_{\{t>0\}}$, one checks that
\[
(h*\check h)(x)=\frac{\beta}{2}e^{-\beta|x|},
\]
hence $\rho_h(x)=\frac{\beta}{2}e^{-\beta|x|}$ for a.e. $x$. For the Lomax family
\[
h_\alpha(t)=\alpha(1+t)^{-1-\alpha}\1_{\{t>0\}},
\]
the bounds
\[
\frac{1-m}{2-m}h_\alpha(|x|)\le \rho_{h_\alpha}(x)\le \frac{1}{2-m}h_\alpha(|x|)
\]
hold for a.e. $x\in\R$ and follow directly from $(h*\check h)(x)\le h(|x|)$ a.e.

\subsection{Proof of Proposition~\ref{prop:odd-kernel}.}
Let
\[
W_T(x_0,\tau_1,\tau_2):=\1_{\{0\le x_0+\tau_1\le T,\ 0\le x_0+\tau_2\le T\}}.
\]
The third-order Campbell formula yields
\[
\E_\theta[\mathcal O_{T,g}]
=
\frac{1}{T}\int_0^T\iint_{\R^2}
g(\tau_1,\tau_2)W_T(x_0,\tau_1,\tau_2)\,
\alpha_{3,\theta}^{\mathrm{red}}(d\tau_1,d\tau_2)\,dx_0.
\]
Applying the order-three expansion \eqref{eq:alpha3-red-expansion} with
\[
f(\tau_1,\tau_2):=\frac{1}{T}\int_0^T g(\tau_1,\tau_2)W_T(x_0,\tau_1,\tau_2)\,dx_0
\]
shows that
\[
\E_\theta[\mathcal O_{T,g}]=A_T+\theta\,\mu_{T,g},
\]
where $A_T$ is determined by the common intensity, common second-order structure, and the even third-order component, and where
\[
\mu_{T,g}
=
\frac{1}{T}\int_0^T\iint_{\R^2}
g(\tau_1,\tau_2)W_T(x_0,\tau_1,\tau_2)c_3^{\mathrm o}(\tau_1,\tau_2)\,d\tau_1d\tau_2\,dx_0.
\]

To identify the intercept, let $r_T(x)=T-x$ on $[0,T]$. On configurations supported in $[0,T]$, this reflection is
\[
r_T=\vartheta_T\circ R.
\]
Since $N_0$ is stationary and reversible,
\[
r_T(N_0|_{[0,T]})
=
(\vartheta_T RN_0)|_{[0,T]}
\stackrel d=
N_0|_{[0,T]}.
\]
Because $g$ is jointly odd,
\[
\mathcal O_{T,g}(r_T\xi)=-\mathcal O_{T,g}(\xi)
\]
for every finite configuration $\xi\subset[0,T]$. Hence $\E_0[\mathcal O_{T,g}]=0$. But $\E_0[\mathcal O_{T,g}]=A_T$, so $A_T=0$, proving the exact identity
\[
\E_\theta[\mathcal O_{T,g}]=\theta\,\mu_{T,g}.
\]

If $\mathrm{supp}(g)\subset[-H,H]^2$ and
\[
a_T(\tau_1,\tau_2):=\frac1T\int_0^T W_T(x_0,\tau_1,\tau_2)\,dx_0,
\]
then, on the support of $g$,
\[
|a_T(\tau_1,\tau_2)-1|\le \frac{2H}{T}
\]
for $T>H$. Therefore
\[
|\mu_{T,g}-\mu_g|
\le \frac{2H}{T}\|g\|_\infty\|c_3^{\mathrm o}\|_1,
\qquad
\mu_g:=\iint_{\R^2}g(\tau_1,\tau_2)c_3^{\mathrm o}(\tau_1,\tau_2)\,d\tau_1d\tau_2.
\]
Finally, because $g$ is real and jointly odd, $\widehat g=\ii H_g$, where $H_g:=-\ii\widehat g$ is real-valued and jointly odd. Fourier inversion gives
\[
\mu_g
=
\frac{1}{(2\pi)^2}\iint \widehat g(-\omega_1,-\omega_2)\,\widehat{c_3^{\mathrm o}}(\omega_1,\omega_2)\,d\omega_1d\omega_2.
\]
By Proposition~\ref{prop:sign-symmetrized-family},
\[
\widehat{c_3^{\mathrm o}}=\ii\,\Imm B_1,
\qquad
\widehat g(-\omega_1,-\omega_2)=-\ii H_g(\omega_1,\omega_2),
\]
so
\[
\mu_g
=
\frac{1}{(2\pi)^2}\iint H_g(\omega_1,\omega_2)\,\Imm B_1(\omega_1,\omega_2)\,d\omega_1d\omega_2.
\]

\begin{lem}[Small-frequency Abelian estimates]\label{lem:small-frequency-abelian-estimates}
Let $\overline H(x):=\int_x^\infty h(u)\,du\sim x^{-\alpha}L(x)$ with $0<\alpha\le2$, where $L$ is slowly varying. Write $L_t:=L(1/t)$,
\[
U(t):=\int_0^\infty \cos(tu)h(u)\,du,
\qquad
V(t):=\int_0^\infty \sin(tu)h(u)\,du.
\]
Then, as $t\downarrow0$, the following estimates hold.
\begin{enumerate}
\item If $0<\alpha<1$, then
\[
1-U(t)\sim S(\alpha)t^\alpha L_t,
\qquad
V(t)\sim C(\alpha)t^\alpha L_t,
\]
where
\[
S(\alpha)=\frac{\pi/2}{\Gamma(\alpha)\sin(\pi\alpha/2)},
\qquad
C(\alpha)=\frac{\pi/2}{\Gamma(\alpha)\cos(\pi\alpha/2)}.
\]
\item If $\alpha=1$, then
\[
1-U(t)\sim \frac{\pi}{2}tL_t,
\qquad
2V(t)-V(2t)\sim 2(\log2)tL_t.
\]
More explicitly, with $J(x):=\int_1^x u^{-1}L(u)\,du$, the potentially larger slowly varying contribution to $V(t)$ is of order $tJ(1/t)$, and it cancels in $2V(t)-V(2t)$ because
\[
J(1/t)-J(1/(2t))\sim (\log2)L_t.
\]
\item If $1<\alpha<2$, then $\mu_1:=\int_0^\infty u h(u)\,du<\infty$, and
\[
1-U(t)\sim S(\alpha)t^\alpha L_t,
\qquad
V(t)-\mu_1t\sim C(\alpha)t^\alpha L_t.
\]
\item If $\alpha=2$, then $\mu_1:=\int_0^\infty u h(u)\,du<\infty$,
\[
V(t)-\mu_1t\sim -\frac{\pi}{2}t^2L_t,
\qquad
2V(t)-V(2t)\sim \pi t^2L_t.
\]
Moreover, if $R_2(x):=1+\int_1^x u^{-1}L(u)\,du$, then
\[
1-U(t)=O(t^2R_2(1/t)),
\qquad
 tR_2(1/t)=o(L_t).
\]
\end{enumerate}
\end{lem}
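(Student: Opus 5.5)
The plan is to work directly with the tail function $\overline H$ and write both transforms through integration by parts:
\[
1-U(t)=t\int_0^\infty \sin(tu)\,\overline H(u)\,du,
\qquad
V(t)=t\int_0^\infty \cos(tu)\,\overline H(u)\,du.
\]
The first identity uses $\overline H(0)=1$. The second is convergent as an improper integral because $\overline H\downarrow0$. After the substitution $u=x/t$, these become
\[
1-U(t)=\int_0^\infty \sin x\,\overline H(x/t)\,dx,
\qquad
V(t)=\int_0^\infty\cos x\,\overline H(x/t)\,dx.
\]
Since $\overline H(x/t)\sim t^\alpha x^{-\alpha}L_t$ for each fixed $x$, the guiding heuristic is
\[
\int_0^\infty \sin x\,x^{-\alpha}\,dx=\Gamma(1-\alpha)\cos(\pi\alpha/2),
\]
and similarly for cosine. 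The reflection formula then rewrites $\Gamma(1-\alpha)\cos(\pi\alpha/2)$ as $(\pi/2)/(\Gamma(\alpha)\sin(\pi\alpha/2))$. This is the classical Abelian theorem for Fourier transforms of regularly varying tails, in the style of Pitman and of Bingham--Goldie--Teugels, Ch.~4. I would either cite that result or reprove it in each case by the same template.

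\textbf{Case (i), $0<\alpha<1$.} Both $\sin x\,x^{-\alpha}$ and $\cos x\,x^{-\alpha}$ are improperly integrable. I split the range into $(0,\epsilon)$, $(\epsilon,A)$ and $(A,\infty)$.
\begin{itemize}
\item On $(\epsilon,A)$, uniform convergence for slowly varying functions gives the limit.
\item Near zero, Potter bounds give $\overline H(x/t)\le C t^{\alpha-\delta}x^{-\alpha-\delta}L_t$ for $x\ge t$. This is combined with $\overline H\le1$ on $x<t$. For the sine integral the extra factor $x$ makes the contribution near zero small. For the cosine integral, choosing $\delta<1-\alpha$ keeps the bound integrable.
\item On $(A,\infty)$, I use the second mean value theorem, since $\overline H(x/t)$ is monotone in $x$. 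The tail is then bounded by $2\overline H(A/t)\sim 2t^\alpha A^{-\alpha}L_t$, which is small relative to $t^\alpha L_t$ once $A$ is large.
\end{itemize}

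\textbf{Case (iii), $1<\alpha<2$.} The mean $\mu_1=\int_0^\infty\overline H$ is finite. I write
\[
V(t)-\mu_1 t=t\int_0^\infty(\cos(tu)-1)\,\overline H(u)\,du .
\]
After rescaling, this integral converges absolutely against $x^{-\alpha}$, using $1-\cos x=O(x^2)$ near zero and $\alpha<2$. The same splitting argument then gives $C(\alpha)$, and the sign is consistent because $\cos(\pi\alpha/2)<0$. The formula for $1-U(t)$ is as in (i), since $\sin x\,x^{-\alpha}$ is integrable for $\alpha<2$.

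\textbf{Boundary cases $\alpha=1$ and $\alpha=2$.} At $\alpha=1$ the estimate for $1-U(t)$ still follows the template. For $V$, however, $\cos x\,x^{-1}$ is not integrable near zero. Writing $V(t)=t\int_0^{1/t}\overline H+O(tL_t)$ produces the term $tJ(1/t)$. The key step is the combination $2V(t)-V(2t)$, which equals
\[
\int_0^\infty(\cos x-\cos 2x)\,\overline H(x/t)\,dx
\]
after the substitution $x\mapsto x/2$ in the second term (equivalently, $2t\int_0^\infty(\cos tu-\cos 2tu)\overline H(u)\,du$). The kernel $(\cos x-\cos2x)/x$ is integrable, and its Frullani integral is $\log 2$, which gives $2(\log2)tL_t$. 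The statement about $J$ follows from uniform convergence: $\int_{1/(2t)}^{1/t}u^{-1}L(u)\,du\sim(\log2)L_t$.

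At $\alpha=2$ the same device handles $V(t)-\mu_1t$. The kernel $(\cos x-1)x^{-2}$ is integrable, with
\[
\int_0^\infty(\cos x-1)\,x^{-2}\,dx=-\pi/2,
\]
and the Frullani-type constant for $2(\cos x-1)-(\cos 2x-1)$ gives $\pi$. For $1-U$ at $\alpha=2$, I bound $\sin(tu)\le tu$ for $u\le1/t$, which gives $t^2\int_0^{1/t}u\overline H(u)\,du=O(t^2R_2(1/t))$. The remainder on $u>1/t$ is bounded by $O(t\,\overline H(1/t)/t)$ via the second mean value theorem, which is $O(t^2L_t)$ and is absorbed. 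Finally, $tR_2(1/t)=o(L_t)$ holds because $R_2$ is slowly varying, so $R_2(1/t)\le t^{-1/2}$ eventually, while $L_t\ge t^{1/2}$ eventually.

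\textbf{Main obstacle.} I expect the delicate point to be the domination near $x=0$ in the rescaled integrals, together with the borderline bookkeeping at $\alpha=1,2$. There, slowly varying integrals such as $J$ and $R_2$ may dominate $L$, and the cancellations $2V(t)-V(2t)$ are exactly what removes them. My first attempt would be to handle every case with Potter bounds on $(t,A)$, the trivial bound $\overline H\le1$ on $(0,t)$, and the second mean value theorem on the tail.
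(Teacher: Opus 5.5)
Your proposal is correct, but it takes a genuinely different route from the paper. The paper does not reprove any Abelian limits. It quotes Pitman's Theorem~1 (and Theorem~3 at $\alpha=2$) for $1-U$, and Theorems~7 and~8(i) for $V$, the latter read with the corrected range $2n-1<m<2n+1$. At $\alpha=1$ it uses Pitman's second-order expansion $V(t)=tI(1/t)-\gamma\overline H(1/t)+o(tL_t)$, with $I(x)=\int_0^x\overline H$. In $2V(t)-V(2t)$ it must then cancel by hand both the $I$-terms (via $J(x)-J(x/2)\sim(\log 2)L(x)$) and the Euler-constant terms. At $\alpha=2$ it reads $2V(t)-V(2t)$ off $V(t)-\mu_1t$, and it proves $tR_2(1/t)=o(L_t)$ by a Potter estimate $R_2(x)=O(x^\varepsilon L(x))$.

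You instead integrate by parts onto $\overline H$, rescale, and prove each limit directly. You use uniform convergence on $[\epsilon,A]$, Potter bounds near $0$, and the second mean value theorem at infinity, and the constants appear as explicit Mellin and Frullani integrals. The payoff is at the boundary cases. Writing $2V(t)-V(2t)=2t\int_0^\infty(\cos tu-\cos 2tu)\,\overline H(u)\,du$ gives a kernel integrable against $x^{-1}$, so the divergent $tJ(1/t)$ term and Euler's constant never appear. Your argument is self-contained and does not depend on the misprinted range in Pitman's Theorem~8(i); the paper's is much shorter.

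A few details need tightening, none of which affects the conclusions.
\begin{itemize}
\item The Potter bound near $0$ should read $\overline H(x/t)\le C\,\overline H(1/t)\,x^{-\alpha-\delta}$, i.e.\ with the factor $t^\alpha L_t$ rather than $t^{\alpha-\delta}L_t$; the latter is not $O(t^\alpha L_t)$. It also holds only for $x\ge X_0t$, not for all $x\ge t$.
\item Your first expression for $2V(t)-V(2t)$ drops a factor $2$. The parenthetical form is the correct one and is what yields $2\log 2$.
\item In case (iii) and at $\alpha=2$, the second mean value theorem does not cover the non-oscillating $-1$ part of the kernel on $(A,\infty)$. Handle it with Karamata's theorem, which gives $\int_A^\infty\overline H(x/t)\,dx\sim A^{1-\alpha}\overline H(1/t)/(\alpha-1)$.
\item At $\alpha=2$, absorbing the $O(t^2L_t)$ tail of $1-U(t)$ into $O(t^2R_2(1/t))$ uses $L=O(R_2)$. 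This follows from $\int_{x/2}^x u^{-1}L(u)\,du\sim(\log 2)L(x)$.
\item The bounds $R_2(1/t)\le t^{-1/2}$ and $L_t\ge t^{1/2}$ give only $tR_2(1/t)=O(L_t)$, not $o(L_t)$. Use the exponent $1/3$ in both bounds instead. Also justify slow variation of $R_2$, for example by Bingham--Goldie--Teugels Proposition~1.5.9a or the paper's Potter argument.
\end{itemize}
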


\begin{proof}
Let $X$ have density $h$ on $(0,\infty)$ and characteristic function
\[
\phi_X(t):=\E e^{\ii tX}=U(t)+\ii V(t).
\]
In Pitman's notation \cite{Pitman1968}, the tail sum and tail difference are
\[
H_P(x):=1-F(x)+F(-x),\qquad K_P(x):=1-F(x)-F(-x).
\]
Because $X\ge0$, for all large $x$ these are both equal to
\[
\overline H(x)=\PP(X>x)=\int_x^\infty h(u)\,du.
\]
Thus both $H_P$ and $K_P$ are regularly varying with index $-\alpha$. The additional monotonicity/one-sidedness hypothesis in Pitman's Theorem~7 is automatic here, since $K_2(x)=F(-x)=0$ for $x>0$.

For the real part, Pitman \cite[Theorem~1]{Pitman1968} gives
\[
1-U(t)\sim S(\alpha)\overline H(1/t),\qquad 0<\alpha<2,
\]
and Pitman \cite[Theorem~3]{Pitman1968} gives the endpoint estimate, when $\alpha=2$,
\[
1-U(t)\sim t^2\int_0^{1/t}x\overline H(x)\,dx.
\]
Equivalently, the latter endpoint may also be read from Pitman \cite[Theorem~6(iii), $n=0$]{Pitman1968}. Since
\[
\overline H(x)\sim x^{-\alpha}L(x),
\]
the first display gives the asserted real-part asymptotic for $0<\alpha<2$, and the second gives
\[
1-U(t)=O\{t^2R_2(1/t)\},
\qquad
R_2(x):=1+\int_1^x u^{-1}L(u)\,du,
\]
when $\alpha=2$.

For the sine transform, Pitman \cite[Theorem~7(i)]{Pitman1968} gives
\[
V(t)\sim C(\alpha)\overline H(1/t),\qquad 0<\alpha<1.
\]
Pitman \cite[Theorem~7, case $m=1$]{Pitman1968} gives
\[
V(t)-t\int_0^{1/t}\overline H(x)\,dx
\sim -\gamma\overline H(1/t),
\qquad \alpha=1,
\]
where $\gamma$ is Euler's constant. Finally, Pitman \cite[Theorem~8(i), $n=1$]{Pitman1968}, in the odd-moment-subtracted case, gives
\[
V(t)-\mu_1t\sim C(\alpha)\overline H(1/t),
\qquad 1<\alpha\le2,
\]
where the range in the printed statement of Theorem~8(i) is read with the standard correction $2n-1<m<2n+1$, noted in \cite[Lemma~6.2, after (6.13)]{PitmanPitman2016}, and where
\[
\mu_1=\int_0^\infty xh(x)\,dx=\int_0^\infty \overline H(x)\,dx<\infty.
\]
The finiteness of $\mu_1$ for $\alpha>1$ follows, for example, from Karamata's theorem for tails of regularly varying functions \cite[Theorem~1.5.11(ii)]{Bingham_Goldie_Teugels_1987}. The constants in Pitman's notation are
\[
S(\alpha)=\int_0^\infty \frac{\sin y}{y^\alpha}\,dy
=\frac{\pi/2}{\Gamma(\alpha)\sin(\pi\alpha/2)},
\]
and, for the sine transform after the appropriate odd-moment subtraction,
\[
C(\alpha)=\int_0^\infty \frac{\cos y}{y^\alpha}\,dy
=\frac{\pi/2}{\Gamma(\alpha)\cos(\pi\alpha/2)},
\]
with the second identity understood by the convergent subtracted-integral definition when $1<\alpha\le2$.

It remains only to spell out the two cancellations used later. For $\alpha=1$, set
\[
I(x):=\int_0^x\overline H(u)\,du,
\qquad
J(x):=\int_1^x u^{-1}L(u)\,du.
\]
The preceding Pitman expansion gives
\[
V(t)=tI(1/t)-\gamma\overline H(1/t)+o\{\overline H(1/t)\}.
\]
Since $\overline H(1/t)\sim tL_t$, where $L_t:=L(1/t)$,
\begin{align*}
2V(t)-V(2t)
&=2t\{I(1/t)-I(1/(2t))\}\notag\\
&\quad -2\gamma\overline H(1/t)+\gamma\overline H(1/(2t))+o(tL_t).
\end{align*}
The Euler-constant terms cancel because $\overline H(1/t)\sim tL_t$ and $\overline H(1/(2t))\sim2tL_t$. Moreover,
\[
I(x)-I(x/2)
=\int_{x/2}^x\overline H(u)\,du
\sim \int_{x/2}^x\frac{L(u)}{u}\,du
=J(x)-J(x/2),
\]
and the uniform convergence theorem for slowly varying functions \cite[Theorem~1.2.1]{Bingham_Goldie_Teugels_1987} gives
\[
J(x)-J(x/2)\sim(\log2)L(x).
\]
Therefore
\[
2V(t)-V(2t)\sim 2(\log2)tL_t.
\]

For $\alpha=2$, the same corrected $n=1$ case of Pitman \cite[Theorem~8(i)]{Pitman1968} gives
\[
V(t)-\mu_1t\sim -\frac{\pi}{2}t^2L_t,
\]
and so
\[
2V(t)-V(2t)
=2\{V(t)-\mu_1t\}-\{V(2t)-2\mu_1t\}
\sim \pi t^2L_t.
\]
It remains to justify the small auxiliary estimate $tR_2(1/t)=o(L_t)$. By Potter's theorem \cite[Theorem~1.5.6(i)]{Bingham_Goldie_Teugels_1987}, for every $\varepsilon\in(0,1)$ there are $x_0$ and $C_\varepsilon$ such that, whenever $x_0\le u\le x$,
\[
\frac{L(u)}{L(x)}\le C_\varepsilon\max\{(u/x)^\varepsilon,(u/x)^{-\varepsilon}\}.
\]
The contribution of $[1,x_0]$ to $\int_1^x u^{-1}L(u)\,du$ is $O(1)=O(x^\varepsilon L(x))$, because $x^\varepsilon L(x)\to\infty$. On $[x_0,x]$ Potter's bound gives
\[
\int_{x_0}^x \frac{L(u)}u\,du
\le
C_\varepsilon L(x)x^\varepsilon\int_{x_0}^x u^{-1-\varepsilon}\,du
=
O(x^\varepsilon L(x)).
\]
Thus $R_2(x)=O(x^\varepsilon L(x))$. Choosing $\varepsilon<1$ gives
\[
x^{-1}R_2(x)=o(L(x)),\qquad x\to\infty,
\]
which is the displayed $tR_2(1/t)=o(L_t)$.
\end{proof}

\subsection{Proof of Proposition~\ref{prop:optimal-local-contrast}.}

\begin{proof}
Let $K_H=[-H,H]^2$. For every admissible $g$,
\[
\mu_g
=
\int_{K_H} g(\tau)c_3^{\mathrm o}(\tau)\,d\tau
\le
\int_{K_H}|c_3^{\mathrm o}(\tau)|\,d\tau,
\]
because $|g|\le1$. Hence
\[
D_H\le \int_{K_H}|c_3^{\mathrm o}(\tau)|\,d\tau.
\]

For the reverse inequality, define
\[
g_H(\tau):=\operatorname{sgn}(c_3^{\mathrm o}(\tau))\,\1_{K_H}(\tau),
\]
with $\operatorname{sgn}(0)=0$. Since
\[
c_3^{\mathrm o}(-\tau)=-c_3^{\mathrm o}(\tau)
\quad\text{a.e.},
\]
we may modify $g_H$ on a null set, if desired, so that
\[
g_H(-\tau)=-g_H(\tau)
\]
everywhere. Then $g_H$ is admissible and
\[
\mu_{g_H}
=
\int_{K_H}|c_3^{\mathrm o}(\tau)|\,d\tau.
\]
Therefore
\[
D_H=\int_{K_H}|c_3^{\mathrm o}(\tau)|\,d\tau.
\]

Finally, since $K_H\uparrow\R^2$ and $|c_3^{\mathrm o}|\in L^1(\R^2)$,
monotone convergence gives
\[
D_H\uparrow \int_{\R^2}|c_3^{\mathrm o}(\tau)|\,d\tau
=
\|c_3^{\mathrm o}\|_1.
\]
The last claim is immediate.
\end{proof}

\begin{lem}[Diagonal imaginary part of the branching numerator]\label{lem:diagonal-imaginary-part}
Let $0<m<1$, let $h$ be a probability density on $(0,\infty)$, define
\[
U(t):=\int_0^\infty \cos(tu)h(u)\,du,
\qquad
V(t):=\int_0^\infty \sin(tu)h(u)\,du,
\]
and let $Q$ be the numerator term from Theorem~\ref{thm:branching-bispectrum}. Then $A(t):=\Imm Q(t,t)$ satisfies
\begin{align}
A(t)
&=2(1-m)(2V(t)-V(2t)) \notag\\
&\quad+2(1-2m)\{(1-U(t))(V(2t)-V(t))-(1-U(2t))V(t)\} \notag\\
&\quad+2m\{[(1-U(t))^2-V(t)^2]V(2t)-2(1-U(2t))(1-U(t))V(t)\}.
\label{eq:diagonal-imaginary-A}
\end{align}
\end{lem}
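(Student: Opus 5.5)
The identity is a finite algebraic computation, so the plan is to evaluate $Q(t,t)$ explicitly from its definition in Theorem~\ref{thm:branching-bispectrum} and take the imaginary part. Under the paper's Fourier convention, and because $h$ is supported on $(0,\infty)$,
\[
\widehat h(-t)=U(t)+\ii V(t),\qquad \widehat h(2t)=U(2t)-\ii V(2t).
\]
Setting $\omega_1=\omega_2=t$ in the definition of $Q$ gives
\[
Q(t,t)=z^2+2wz-2m\,wz^2,\qquad z:=\widehat h(-t),\quad w:=\widehat h(2t).
\]

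Since the target formula is written in the small quantities $1-U$ and $V$, I would introduce the deviations from $1$:
\[
z=1-p,\qquad w=1-q,
\]
\[
p:=(1-U(t))-\ii V(t),\qquad q:=(1-U(2t))+\ii V(2t).
\]
Substituting and expanding gives a polynomial in $p$ and $q$. I would collect it by total degree:
\begin{itemize}
\item the constant $1+2-2m=3-2m$, which is real;
\item the linear terms, with coefficients determined by $m$;
\item the quadratic terms $p^2$ and $pq$;
\item the cubic term $-2m\,qp^2$.
\end{itemize}

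Next I would take imaginary parts degree by degree, using
\[
\Imm p=-V(t),\qquad \Imm q=V(2t),
\]
\[
\Imm(p^2)=-2(1-U(t))V(t),\qquad \Imm(pq)=(1-U(t))V(2t)-(1-U(2t))V(t).
\]
For the cubic term, $\Imm(p^2q)$ is the product of $(1-U(t))^2-V(t)^2-2\ii(1-U(t))V(t)$ with $q$. Its imaginary part is exactly the bracket multiplying $2m$ in \eqref{eq:diagonal-imaginary-A}, up to the overall sign.

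The expected output of each degree is as follows.
\begin{itemize}
\item The linear terms should produce $2(1-m)\bigl(2V(t)-V(2t)\bigr)$. The coefficient of $p$ is $-2-2+4m$ and that of $q$ is $-2+2m$, so the imaginary part is $(4-4m)V(t)-(2-2m)V(2t)$, which matches.
\item The quadratic terms should combine into the $2(1-2m)$ bracket. This uses the identity $(1-U(t))V(2t)-(1-U(2t))V(t)-(1-U(t))V(t)=(1-U(t))\bigl(V(2t)-V(t)\bigr)-(1-U(2t))V(t)$.
\end{itemize}

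The main obstacle is purely bookkeeping: getting the quadratic coefficients right. The $p^2$ term receives contributions both from $z^2$ and from $-2mwz^2$, and the $pq$ term from both $2wz$ and $-2mwz^2$. These must be tallied so that the $p^2$ and $pq$ coefficients become proportional with common factor $(1-2m)$. I would double-check this step by testing the final formula on the exponential kernel, where $U$ and $V$ are explicit, or in the limit $m\to0$.
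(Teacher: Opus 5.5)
Your plan is correct and is essentially the paper's own proof. You substitute $\widehat h(-t)=U(t)+\ii V(t)$ and $\widehat h(2t)=U(2t)-\ii V(2t)$ into $Q(t,t)=z^2+2wz-2m\,wz^2$ and collect imaginary parts; your shift $z=1-p$, $w=1-q$ only organizes the bookkeeping by degree, and the quadratic part does come out as $(1-2m)p^2+2(1-2m)pq$, as you anticipated.

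One sign needs fixing. Since $-2m\,wz^2=-2m(1-q)(1-p)^2$, the cubic term is $+2m\,qp^2$, not $-2m\,qp^2$. With that coefficient, $\Imm(2m\,qp^2)$ reproduces the third line of \eqref{eq:diagonal-imaginary-A} exactly, so there is no sign ambiguity to resolve.
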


\begin{proof}
Since $\widehat h(t)=U(t)-\ii V(t)$ and $\widehat h(-t)=U(t)+\ii V(t)$, the diagonal numerator is
\[
Q(t,t)=\widehat h(-t)^2+\widehat h(2t)\{2\widehat h(-t)-2m\widehat h(-t)^2\}.
\]
Substituting the displayed forms of $\widehat h(t)$, $\widehat h(-t)$, and $\widehat h(2t)$, then collecting imaginary parts, gives \eqref{eq:diagonal-imaginary-A}.
\end{proof}

\subsection{Proof of Proposition~\ref{prop:nonzero-imag-bispectrum}.}
Let $B_{\mathrm{fac}}$ denote the reduced third factorial-cumulant bispectrum; this is the bispectrum denoted by $B$ in the statement. By Proposition~\ref{prop:L1-branching}, this bispectrum is well-defined because the Poisson branching genealogy is subcritical and the edge displacement law has a Lebesgue density. Let $B_{\mathrm{comp}}$ denote the Fourier--Stieltjes transform of the reduced complete, or ordinary, third cumulant measure. By Lemma~\ref{lem:ordinary-factorial-bispectrum},
\[
\Imm B_{\mathrm{fac}}=\Imm B_{\mathrm{comp}}.
\]
By Theorem~\ref{thm:branching-bispectrum},
\[
B_{\mathrm{comp}}(\omega_1,\omega_2)
=\lambda\,
\frac{1-m^2 Q(\omega_1,\omega_2)}
{|1-m\widehat h(\omega_1)|^2 |1-m\widehat h(\omega_2)|^2 |1-m\widehat h(-\omega_1-\omega_2)|^2},
\]
where
\[
Q(\omega_1,\omega_2)
:=\widehat h(-\omega_1)\widehat h(-\omega_2)
+\widehat h(\omega_1+\omega_2)
\{\widehat h(-\omega_1)+\widehat h(-\omega_2)-2m\widehat h(-\omega_1)\widehat h(-\omega_2)\}.
\]
On the diagonal this becomes
\[
Q(t,t)=\widehat h(-t)^2+\widehat h(2t)\{2\widehat h(-t)-2m\widehat h(-t)^2\}.
\]
Because the denominator is real and strictly positive,
\[
\Imm B_{\mathrm{fac}}(t,t)=\Imm B_{\mathrm{comp}}(t,t)
= -\lambda m^2\,
\frac{A(t)}{|1-m\widehat h(t)|^4 |1-m\widehat h(-2t)|^2},
\qquad
A(t):=\Imm Q(t,t).
\]
It is therefore enough to prove that $A(t)\neq0$ for some $t>0$.

Write
\[
U(t):=\int_0^\infty \cos(tu)h(u)\,du,
\qquad
V(t):=\int_0^\infty \sin(tu)h(u)\,du,
\]
so that $\widehat h(t)=U(t)-\ii V(t)$. By Lemma~\ref{lem:diagonal-imaginary-part}, $A(t)$ is given by \eqref{eq:diagonal-imaginary-A}.

Assume first that condition~(i) holds, and let $L_t:=L(1/t)$. We use Lemma~\ref{lem:small-frequency-abelian-estimates}. For $0<\alpha<1$,
\[
1-U(t)\sim S(\alpha)t^\alpha L_t,
\qquad
V(t)\sim C(\alpha)t^\alpha L_t.
\]
Thus $1-U(t)$, $1-U(2t)$, $V(t)$, and $V(2t)-V(t)$ are all $O(t^\alpha L_t)$. Since $t^\alpha L_t\to0$, the second line of \eqref{eq:diagonal-imaginary-A} is $O((t^\alpha L_t)^2)=o(t^\alpha L_t)$, and the third line is $O((t^\alpha L_t)^3)=o(t^\alpha L_t)$. Hence
\[
\lim_{t\downarrow0}\frac{A(t)}{t^\alpha L_t}
=2(1-m)(2-2^\alpha)C(\alpha)\neq0.
\]
For $1<\alpha<2$, one has
\[
1-U(t)\sim S(\alpha)t^\alpha L_t,
\qquad
V(t)-\mu_1t\sim C(\alpha)t^\alpha L_t,
\qquad
\mu_1:=\int_0^\infty u h(u)\,du.
\]
The linear term cancels in $2V(t)-V(2t)$. For the second line of \eqref{eq:diagonal-imaginary-A}, write $a_t:=1-U(t)$ and $b_t:=V(t)-\mu_1t$. Then $a_t=O(t^\alpha L_t)$ and $b_t=O(t^\alpha L_t)$, so
\[
a_t(V(2t)-V(t))-a_{2t}V(t)
=
\mu_1t(a_t-a_{2t})+a_t(b_{2t}-b_t)-a_{2t}b_t
=o(t^\alpha L_t).
\]
In the third line, the pure sine product is $V(t)^2V(2t)=O(t^3)=o(t^\alpha L_t)$, and all other products contain at least two factors among $a_t,a_{2t},b_t,b_{2t}$ or an extra factor of $t$; they are therefore also $o(t^\alpha L_t)$. Thus again
\[
\lim_{t\downarrow0}\frac{A(t)}{t^\alpha L_t}
=2(1-m)(2-2^\alpha)C(\alpha)\neq0.
\]
When $\alpha=1$,
\[
1-U(t)\sim \frac{\pi}{2}tL_t,
\qquad
2V(t)-V(2t)\sim 2(\log 2)tL_t.
\]
Equivalently, if $J(x)=\int_1^x u^{-1}L(u)\,du$, then $V(t)$ may contain a term of order $tJ(1/t)$, but this term cancels in $2V(t)-V(2t)$ up to
\[
J(1/t)-J(1/(2t))\sim(\log2)L_t.
\]
For the remaining lines of \eqref{eq:diagonal-imaginary-A}, set
\[
a_t:=1-U(t),\qquad v_t:=V(t),\qquad J_t:=J(1/t).
\]
The Abelian estimates above, together with slow variation, give
\[
a_t=O(tL_t),\qquad a_{2t}=O(tL_t),
\]
and the sine-transform bound
\[
v_t=O\{t(1+J_t)\},\qquad v_{2t}=O\{t(1+J_t)\},\qquad
v_{2t}-v_t=O\{t(1+J_t)\}.
\]
Potter's theorem for slowly varying functions \cite[Theorem~1.5.6(i), p.~25]{Bingham_Goldie_Teugels_1987} implies, after reducing $\varepsilon$ if necessary, that
\[
L_t+L_t^{-1}+1=O(t^{-\varepsilon}),\qquad t\downarrow0.
\]
It also gives $J_t=O(t^{-\varepsilon})$: with $x=1/t$ and a smaller exponent, $L(u)\le C u^{\varepsilon/2}$ for large $u$, while the bounded interval near $1$ contributes only $O(1)$, and therefore
\[
J(x)=\int_1^x u^{-1}L(u)\,du\le C_\varepsilon x^\varepsilon .
\]
Hence, for every sufficiently small $\varepsilon>0$,
\[
L_t+L_t^{-1}+1+J_t=O(t^{-\varepsilon}),\qquad t\downarrow0.
\]
Choosing $\varepsilon<1/2$, we obtain
\[
t(1+J_t)\to0,
\qquad
 t^2L_t(1+J_t)\to0,
\qquad
\frac{t^2(1+J_t)^3}{L_t}\to0 .
\]
Therefore the second line of \eqref{eq:diagonal-imaginary-A} satisfies
\[
a_t(v_{2t}-v_t)-a_{2t}v_t
=
O\{t^2L_t(1+J_t)\}
=
o(tL_t).
\]
For the third line, the terms involving $a_t$ are bounded by
\[
a_t^2v_{2t}+a_{2t}a_tv_t
=
O\{t^3L_t^2(1+J_t)\}
=
tL_t\,O\{t^2L_t(1+J_t)\}
=
o(tL_t),
\]
while the purely sine-transform product obeys
\[
v_t^2v_{2t}
=
O\{t^3(1+J_t)^3\}
=
tL_t\,O\!\left\{\frac{t^2(1+J_t)^3}{L_t}\right\}
=
o(tL_t).
\]
Hence both remaining lines of \eqref{eq:diagonal-imaginary-A} are $o(tL_t)$. Hence
\[
\lim_{t\downarrow0}\frac{A(t)}{tL_t}=4(1-m)\log2>0.
\]
Finally, for $\alpha=2$,
\[
V(t)-\mu_1t\sim -\frac{\pi}{2}t^2L_t,
\qquad
\mu_1:=\int_0^\infty u h(u)\,du.
\]
No leading asymptotic for $1-U(t)$ is needed: if
\[
R_2(x):=1+\int_1^x u^{-1}L(u)\,du,
\]
then Lemma~\ref{lem:small-frequency-abelian-estimates} gives $1-U(t)=O(t^2R_2(1/t))$ and $tR_2(1/t)=o(L_t)$. Also $V(t)=O(t)$, so
\[
(1-U(t))V(t)=O(t^3R_2(1/t))=o(t^2L_t),
\qquad
V(t)^2V(2t)=O(t^3)=o(t^2L_t).
\]
The remaining products in the last two lines of \eqref{eq:diagonal-imaginary-A} are smaller combinations of the same bounds, and hence are also $o(t^2L_t)$. The linear term in $V$ again cancels, and
\[
2V(t)-V(2t)\sim \pi t^2L_t,
\]
so
\[
\lim_{t\downarrow0}\frac{A(t)}{t^2L_t}=2\pi(1-m)>0.
\]
Thus in every regularly varying case there are arbitrarily small $t>0$ with $A(t)\neq0$.

Assume next that condition~(ii) holds. Let $X$ have density $h$. By Lemma~\ref{lem:uniform-mixture}, there are nonnegative $Z$ and $Y\sim \mathrm{Unif}(0,1)$, independent, such that $X\stackrel d=YZ$. The moment identities
\[
\E X=\frac12\E Z,
\qquad
\E X^2=\frac13\E Z^2
\]
show that $\E Z^2<\infty$ under the finite-second-moment assumption on $h$. Moreover, $h=a^{-1}\1_{(0,a)}$ exactly when $Z=a$ a.s. Under condition~(ii), $Z$ is therefore nondegenerate. With the usual continuous interpretation at $Z=0$,
\[
U(t)=\E\frac{\sin(tZ)}{tZ},
\qquad
V(t)=\E\frac{1-\cos(tZ)}{tZ},
\]
and
\[
2V(t)-V(2t)=\E\frac{(1-\cos(tZ))^2}{tZ}.
\]
Also
\[
\frac{V(t)}{t}\to \frac{1}{2}\E Z,
\qquad
\frac{1-U(t)}{t^2/2}\to \frac{1}{3}\E Z^2.
\]
If $\E Z^3<\infty$, then the mixture formulas and Taylor expansion yield
\[
1-U(t)=\frac{t^2}{6}\E Z^2+o(t^2),
\qquad
V(t)=\frac{t}{2}\E Z-\frac{t^3}{24}\E Z^3+o(t^3),
\]
and therefore
\[
2V(t)-V(2t)=\frac{t^3}{4}\E Z^3+o(t^3).
\]
Substituting these expansions into \eqref{eq:diagonal-imaginary-A} gives
\[
A(t)
=
\frac{t^3}{2}\Big((1-m)\E Z^3-(1-2m)\E Z\,\E Z^2-m(\E Z)^3\Big)
+o(t^3).
\]
Equivalently,
\begin{align*}
\lim_{t\downarrow0}\frac{A(t)}{t^3/2}
&=(1-m)\E Z^3-(1-2m)\E Z\,\E Z^2-m(\E Z)^3 \\
&=(1-m)(\E Z^3-\E Z\,\E Z^2)+m\E Z\{\E Z^2-(\E Z)^2\}.
\end{align*}
The first bracket is nonnegative because, for an independent copy $Z'$ of $Z$,
\[
\E Z^3-\E Z\,\E Z^2
=
\frac12\E[(Z-Z')^2(Z+Z')]\ge0.
\]
The second bracket is $\mathrm{Var}(Z)>0$ because $Z$ is nondegenerate, and $\E Z>0$. Hence the displayed limit is strictly positive.

If $\E Z^3=\infty$, Fatou's lemma applied to the nonnegative first-line contribution
\[
4(1-m)\E\left[\frac{(1-\cos(tZ))^2}{t^4Z}\right]
\]
to $A(t)/(t^3/2)$ gives an infinite lower limit. It remains to check that the other two lines of \eqref{eq:diagonal-imaginary-A}, after division by $t^3/2$, are bounded in absolute value. Put
\[
a_t:=1-U(t),\qquad v_t:=V(t).
\]
The mixture representation gives, for $s=1,2$,
\[
0\le a_{st}\le \frac{s^2t^2}{6}\E Z^2,
\qquad
0\le v_{st}\le \frac{st}{2}\E Z,
\]
using $1-\sin y/y\le y^2/6$ and $1-\cos y\le y^2/2$. Hence
\[
|a_t(v_{2t}-v_t)-a_{2t}v_t|
\le a_t(v_{2t}+v_t)+a_{2t}v_t=O(t^3),
\]
and
\[
|[a_t^2-v_t^2]v_{2t}-2a_{2t}a_tv_t|
\le a_t^2v_{2t}+v_t^2v_{2t}+2a_{2t}a_tv_t=O(t^3).
\]
Thus the remaining contributions to $A(t)/(t^3/2)$ are bounded, while the first-line contribution has infinite lower limit. Consequently $A(t)>0$ for all sufficiently small $t>0$ in this case as well. Therefore $A(t_0)\neq0$ for some $t_0>0$, and consequently $\Imm B(t_0,t_0)\neq0$. The final separation claim follows from Theorem~\ref{thm:real-bispectrum}.

\begin{rem}[Uniform-kernel exception and monotone sharpness]\label{rem:uniform-kernel-exception}
The exclusion in condition~(ii) of Proposition~\ref{prop:nonzero-imag-bispectrum} is not merely an artefact of the diagonal argument. For the uniform density $a^{-1}\1_{(0,a)}$, the numerator $Q$ in Theorem~\ref{thm:branching-bispectrum} is real for every $(\omega_1,\omega_2)$. Indeed,
\[
\widehat h(\omega)=e^{-\ii a\omega/2}\operatorname{sinc}(a\omega/2),
\]
with the continuous convention at zero. Writing $x=a\omega_1/2$, $y=a\omega_2/2$, $S=x+y$, and $s_z=\operatorname{sinc}(z)$, the $m$-dependent term in $Q$ is real and the remaining imaginary part is
\[
s_xs_y\sin S-s_xs_S\sin y-s_ys_S\sin x.
\]
For $xyS\ne0$, this equals
\[
\sin x\,\sin y\,\sin S
\left\{\frac1{xy}-\frac1{xS}-\frac1{yS}\right\}=0,
\]
and the remaining cases follow by continuity. Thus the imaginary bispectrum vanishes identically in the uniform case; on the diagonal this appears as the identity $A(t)=0$. This calculation proves the phase cancellation of the third-order transform; it does not by itself assert reversibility or non-reversibility of the unlabeled stationary point process. It shows that a nonzero imaginary bispectrum is a sufficient witness against reversible $L^1$ comparison nulls, not a necessary feature of every one-sided branching construction.

The monotone finite-second-moment exception is sharp. Indeed, by Lemma~\ref{lem:uniform-mixture}, if $X$ has a nonincreasing density $h$ on $(0,\infty)$, then
\[
X\stackrel d=YZ,
\qquad
Y\sim \mathrm{Unif}(0,1),
\]
with $Y$ independent of a nonnegative random variable $Z$. The small-frequency diagonal calculation in the proof of Proposition~\ref{prop:nonzero-imag-bispectrum} shows that, when $\E Z^3<\infty$, the leading coefficient is
\[
\Delta_m(Z)
=
(1-m)\{\E Z^3-\E Z\,\E Z^2\}
+
m\,\E Z\,\mathrm{Var}(Z).
\]
The first bracket is always nonnegative, since for an independent copy $Z'$,
\[
\E Z^3-\E Z\,\E Z^2
=
\frac12\E\big[(Z-Z')^2(Z+Z')\big]\ge0.
\]
Hence
\[
\Delta_m(Z)\ge m\,\E Z\,\mathrm{Var}(Z).
\]
Because $m>0$ and $\E Z>0$, this coefficient can vanish only if $\mathrm{Var}(Z)=0$, that is, only if $Z$ is deterministic. By Lemma~\ref{lem:uniform-mixture}, this deterministic-scale case is exactly
\[
h(x)=a^{-1}\mathbf 1_{(0,a)}(x)
\quad\text{a.e.}
\]
If $\E Z^3=\infty$, the same proof gives an infinite lower limit for the corresponding small-frequency coefficient, so degeneracy is again impossible. Thus, among monotone finite-second-moment kernels, the uniform law is the unique case not separated by the diagonal imaginary-bispectrum argument.
\end{rem}

\begin{rem}[Quantitative small-frequency lower bounds]\label{rem:small-frequency-lower-bounds}
The proof of Proposition~\ref{prop:nonzero-imag-bispectrum} gives explicit small-frequency lower bounds, not only non-vanishing. Throughout this remark $B$ denotes the reduced third factorial-cumulant bispectrum and $\lambda$ is the total intensity.

First assume
\[
\overline H(x):=\int_x^\infty h(u)\,du\sim x^{-\alpha}L(x),
\qquad 0<\alpha\le 2,
\]
and write $L_t:=L(1/t)$. Define
\[
\chi_\alpha:=
\begin{cases}
2(2-2^\alpha)C(\alpha), & 0<\alpha<2,\ \alpha\ne1,\\[3pt]
4\log 2, & \alpha=1,\\[3pt]
2\pi, & \alpha=2,
\end{cases}
\]
where
\[
C(\alpha):=\frac{\pi/2}{\Gamma(\alpha)\cos(\pi\alpha/2)}
\qquad (0<\alpha<2,\ \alpha\ne1).
\]
Then $\chi_\alpha>0$, and
\[
\lim_{t\downarrow0}
\frac{|\Imm B(t,t)|}{t^\alpha L_t}
=
\frac{\lambda m^2}{(1-m)^5}\,\chi_\alpha .
\]
Consequently, for every $\eta\in(0,1)$, there exists $t_\eta>0$ such that, for $0<t<t_\eta$,
\[
|\Imm B(t,t)|
\ge
(1-\eta)\frac{\lambda m^2}{(1-m)^5}\,
\chi_\alpha\,t^\alpha L(1/t).
\]

Second assume that $h$ is nonincreasing on $(0,\infty)$ and has finite second moment. Let $X\sim h$, and use the scale-mixture representation from Lemma~\ref{lem:uniform-mixture},
\[
X\stackrel d=YZ,
\qquad
Y\sim\mathrm{Unif}(0,1),
\]
with $Y$ independent of $Z$. If $\E Z^3<\infty$, set
\[
\Delta_m(Z)
:=
(1-m)\{\E Z^3-\E Z\,\E Z^2\}
+
m\,\E Z\,\mathrm{Var}(Z).
\]
Then
\[
\lim_{t\downarrow0}
\frac{|\Imm B(t,t)|}{t^3}
=
\frac{\lambda m^2}{2(1-m)^6}\,\Delta_m(Z).
\]
Moreover,
\[
\Delta_m(Z)\ge m\,\E Z\,\mathrm{Var}(Z),
\]
because
\[
\E Z^3-\E Z\,\E Z^2
=
\frac12\E[(Z-Z')^2(Z+Z')]\ge0,
\]
where $Z'$ is an independent copy of $Z$. Thus any class separated from the uniform family in the sense that
\[
\E Z\,\mathrm{Var}(Z)\ge \delta>0
\]
satisfies, for all sufficiently small $t>0$,
\[
|\Imm B(t,t)|
\ge
\frac{\lambda m^3\delta}{4(1-m)^6}\,t^3 .
\]
If $\E Z^3=\infty$, then the same proof gives the stronger conclusion
\[
\liminf_{t\downarrow0}
\frac{|\Imm B(t,t)|}{t^3}
=
\infty .
\]
The exceptional uniform kernel is exactly the case $Z=a$ a.s.; equivalently, $\mathrm{Var}(Z)=0$.

Equivalently, if one uses the second-order-normalized population signal
\[
\mathsf S(t):=
\frac{|\Imm B(t,t)|}{\{\Gamma(t)^2\Gamma(2t)\}^{1/2}},
\]
then $\Gamma(t)\to \lambda/(1-m)^2$ as $t\downarrow0$. Hence, in the regularly varying case,
\[
\lim_{t\downarrow0}
\frac{\mathsf S(t)}{t^\alpha L(1/t)}
=
\frac{m^2}{\sqrt{\lambda}(1-m)^2}\,\chi_\alpha,
\]
while in the finite-third-moment monotone case,
\[
\lim_{t\downarrow0}
\frac{\mathsf S(t)}{t^3}
=
\frac{m^2}{2\sqrt{\lambda}(1-m)^3}\,\Delta_m(Z).
\]
\end{rem}

\subsection{Proof of Remark~\ref{rem:branching-ratio-envelope}.}
By Lemma~\ref{lem:pcp-cumulant-functional},
\[
\kappa_3^{\mathrm{red}}
=
\nu\,\E\sum_{x_0,x_1,x_2\in C}^{\neq}
\delta_{x_1-x_0}\otimes\delta_{x_2-x_0}.
\]
Hence $\kappa_3^{\mathrm{red}}$ is a positive finite measure with total mass
\[
\kappa_3^{\mathrm{red}}(\R^2)=\nu\,\E[(M)_3],
\qquad M:=C(\R).
\]
If it has density $c_3$, then
\[
|B_{\mathrm{fac}}(\omega_1,\omega_2)|
\le
\iint_{\R^2}c_3(\tau_1,\tau_2)\,d\tau_1d\tau_2
=
\nu\,\E[(M)_3],
\]
and the same bound holds for the imaginary part.

For Poisson offspring with mean $m$, the total progeny generating function satisfies
\[
F(z):=\E[z^M]=z\exp(m(F(z)-1)).
\]
Differentiating three times at $z=1$ gives
\[
F^{(3)}(1)=\E[(M)_3]
=
\frac{m^2(2m^2-8m+9)}{(1-m)^5}.
\]
Substituting $\nu=\lambda(1-m)$ yields the fixed-intensity envelope
\[
\nu\,\E[(M)_3]
=
\lambda\,\frac{m^2(2m^2-8m+9)}{(1-m)^4}.
\]
Finally, under the scale family $h_\beta(t)=\beta h(\beta t)$, all cluster edge delays are divided by $\beta$. Therefore the two-dimensional reduced third-cumulant density rescales as
\[
c_3^{(\beta)}(\tau_1,\tau_2)
=\beta^2c_3^{(1)}(\beta\tau_1,\beta\tau_2),
\]
and Fourier transformation gives
\[
B^{(\beta)}(\omega_1,\omega_2)
=B^{(1)}(\omega_1/\beta,\omega_2/\beta).
\]
\end{document}